\newtheorem{theorem}{Theorem}[section]
\newtheorem{lemma}[theorem]{Lemma}
\newtheorem{corollary}[theorem]{Corollary}
\newtheorem{prop}[theorem]{Proposition}
\theoremstyle{definition}
\theoremstyle{remark}
\newtheorem{remark}[theorem]{Remark}
\numberwithin{equation}{section}
\begin{document}

\title[Exceptions to the Erd\H os--Straus--Schinzel conjecture]{Exceptions to the\\ Erd\H os--Straus--Schinzel conjecture}
\author{Carl Pomerance}
\address{Department of Mathematics, Dartmouth College\\
Hanover, NH 03755}
\email{carlp@math.dartmouth.edu}
\author{Andreas Weingartner}
\address{Department of Mathematics, Southern Utah University\\
351 University Boulevard\\
Cedar City, UT 84720}
\email{weingartner@suu.edu}
\subjclass[2010]{11D68, 11D72, 11N37}
\keywords{Egyptian fraction, unit fraction}
\maketitle

{\it\centerline{For Krishnaswami Alladi on his 70th birthday}}

\begin{abstract}
A famous conjecture of Erd\H os and Straus is that for every integer $n\ge2$,
$4/n$ can be represented as $1/x+1/y+1/z$, where $x,y,z$ are positive integers.
This conjecture was generalized to $5/n$ by Sierpi\'nski, and then Schinzel
conjectured that for every integer $m\ge4$ there is a bound $n_m$ such that
the fraction $m/n$ is the sum of 3 unit fractions for all integers $n\ge n_m$.
Leveraging and generalizing work of Elsholtz and Tao, we show that if $n_m$ exists
it must be at least $\exp(m^{1/3+o(1)})$; that is, there are numbers $n$ this large
for which $m/n$ is not the sum of 3 unit fractions.  We prove a weaker, but numerically explicit
version of this theorem, showing that for $m\ge 6.52\times10^9$ there is a prime 
$p\in(m^2,2m^2)$ with $m/p$ not the sum of 3 unit fractions, and report on some
extensive numerical calculations that support this assertion with the much smaller bound $m\ge20$.  A result of Vaughan is  that for each $m$,
most $n$'s have $m/n$ representable; we make the dependence on $m$ in this result
explicit.  In addition,  we prove a result generalizing the problem
to the sum of $j$ unit fractions.
\end{abstract}

\section{Introduction}
Egyptian fractions have a long
and colorful history.  According to the Rhind Papyrus (ca.\ 1550 BCE), ancient Egyptians preferred
to write fractions as sums of unit fractions (fractions with numerator 1).  We have
not seen a compelling argument for {\it why} they had this preference, but nevertheless
it opened the door to many intriguing problems.   For surveys of some of the many problems and results,
see \cite[Ch. 4]{EG}, \cite[Sec. D11]{Gu}.

The Erd\H os--Straus conjecture, which dates to around 1948, asserts that $4/n$ is the sum of 3 unit fractions
for every integer
$n\ge 2$.  An early result of Obl\'ath \cite{O} is 
that $n$ has this property if $n+1$ is divisible by a prime $p\equiv3\pmod 4$.  This
implies that asymptotically all $n$ have the Erd\H os--Straus property.  In fact, exceptions
should only be divisible by primes $p$ for which $p+1$ has no prime factor $q\equiv3\pmod4$.
The number of such primes $p\le N$ is $O(N/\log^{3/2}N)$ as can be seen from sieve methods.
This property allows only $O(N/\log^{3/2} N)$ exceptional numbers up to $N$ via an argument akin to
the Hardy--Ramanujan inequality (cf.~Gottschlich \cite[Lemma 2.3]{G}).  The count of possible
exceptions has been strongly improved, though not recently: In 1970, Vaughan \cite{V} gave
 the upper bound $N/\exp(c\log^{2/3}\kern-2pt N)$ for a positive constant $c$.
 
 Sierpi\'nski conjectured that not only $4/n$, but also $5/n$, can be written as a sum
 of 3 unit fractions, and then Schinzel conjectured that for each integer $m\ge4$,
 $m/n$ is the sum of 3 unit fractions for all sufficiently large $n$, depending on the
 choice of $m$.  Clearly a necessary condition for ``sufficiently large" is that $n\ge m/3$.
 In this paper we show that ``sufficiently large" is indeed big, in fact larger than any
 fixed power of~$m$.
 
 \begin{theorem}
 \label{th-mbound}
 For each $\epsilon>0$ there is a bound $m(\epsilon)$ such that  for each $m\ge m(\epsilon)$
 there is some $n>\exp(m^{1/3-\epsilon})$ with $m/n$ not the sum of $3$ unit fractions.
 \end{theorem}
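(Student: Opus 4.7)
The plan is a counting argument. Set $N = \exp(m^{1/3-\epsilon})$; it suffices to show that the number of primes $p \in (N, 2N]$ for which $m/p = 1/x + 1/y + 1/z$ has a solution is strictly less than $\pi(2N) - \pi(N) \sim N/(2\log N)$, giving the desired exceptional $n = p > \exp(m^{1/3-\epsilon})$. Since $\log N = m^{1/3-\epsilon}$, the target bound on the number of representable primes is of the form $O(N(\log N)^C/m)$ with $C < 2 + O(\epsilon)$.

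I first classify solutions for $p > m$. Clearing denominators gives $p \mid xyz$, and not all three of $x, y, z$ can be divisible by $p$ since the sum would then be $\le 3/p < m/p$ for $m \ge 4$. If exactly two of them are (Case C), a direct calculation parameterizes the solutions by $(Y, Z, e, d_2)$ with $\gcd(Y, Z) = 1$ and $d_2 \mid Y + Z$, giving $p = meYZ - (Y+Z)/d_2$; the resulting count contributes at most the same order as the main case. In the main case (Case B) exactly one variable, say $z = pz'$, is divisible by $p$, yielding the key congruence $mz' \equiv 1 \pmod{p}$ together with $1/x + 1/y = (mz'-1)/(pz')$. Parameterizing by $g = \gcd(x, y) = hG$, $x = hGu$, $y = hGv$ with $\gcd(u, v) = 1$ and $h = \gcd(u+v, g)$, and using $\gcd(u+v, uv) = 1$, a direct calculation gives
\[
p = mGuh - \ell, \qquad k\ell = mGu^2 + 1, \qquad k = (u+v)/h,
\]
under the coprimality conditions $\gcd(G, k) = \gcd(u, hk) = 1$. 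Each valid quadruple $(u, G, h, k)$ produces at most one prime $p$, so bounding representable primes reduces to counting such quadruples.

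The main obstacle is a sharp enough divisor-sum estimate. The condition $p \le 2N$ forces $mGuh \le 2N + \ell$, which in the dominant range confines $Guh \ll N/m$; summing over $k$, which divides $mGu^2+1$, contributes a factor $\tau(mGu^2+1)$ and yields a count of the shape
\[
\ll \sum_{Gu \le N/m} \tau(mGu^2+1)\,\frac{N}{mGu}.
\]
I would then bound this by $N(\log N)^C/m$ with $C < 2 + O(\epsilon)$. The pointwise estimate $\tau(n) \ll n^\delta$ is fatal, since $N^\delta$ swamps any $1/m$ savings; the right approach (generalizing the Elsholtz--Tao argument for $m = 4$) is to exploit that for each $d = Gu$ the values $mGu^2 + 1 = mdu + 1$, as $u$ ranges over divisors of $d$, all lie in the single progression $1 \pmod{md}$, so that the inner sum $\sum_{u \mid d} \tau(mdu+1)$ is controlled by average-order estimates of $\tau$ in arithmetic progressions. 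Combined with the choice $m \ge m(\epsilon)$, this gives the required bound and completes the proof.
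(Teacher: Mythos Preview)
Your parametrization is essentially the same as the paper's (your Case B is their Type I, with $(u,G,h)$ playing the role of $(a,d,c)$ and the factorization $k\ell = mGu^2+1$ matching $ef = ma^2d+1$), and the reduction to a divisor sum over $\tau(mGu^2+1)$ is exactly the right move. The gap is in how you count primes once the parametrization is in hand.

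You write ``each valid quadruple $(u,G,h,k)$ produces at most one prime $p$'' and then bound the number of representable primes by the number of quadruples, leading to the sum $\sum_{Gu\le N/m}\tau(mGu^2+1)\cdot\frac{N}{mGu}$. But this sum is too large: even ignoring the $\tau$ factor, $\sum_{Gu\le X}1/(Gu)\asymp(\log X)^2$, and the Elsholtz--Tao estimate (the paper's \eqref{eq82}) gives $\sum_{X/2\le Gu\le X}\tau(mGu^2+1)/(Gu)\ll(\log X)^2\log m$ dyadically, so the full sum is of order $(\log N)^3\log m$. Thus your count is $\frac{N}{m}(\log N)^3\log m$, which beats $N/\log N$ only when $(\log N)^4\ll m/\log m$, i.e.\ for $N\le\exp(m^{1/4-\epsilon})$. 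Your hoped-for exponent $C<2$ is simply not available for this sum.

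The missing ingredient is Brun--Titchmarsh. For fixed $(u,G,\ell)$ you should not sum over all $h\le N/(mGu)$ but instead count \emph{primes} $p\equiv -\ell\pmod{mGu}$ in $(N,2N]$, which is $\ll N/(\varphi(mGu)\log(N/mGu))$. The extra $\log(N/mGu)$ in the denominator, when combined with a dyadic decomposition in $Gu$, converts the $(\log N)^3$ into $(\log N)^2\log\log N$, and this is exactly what yields the exponent $1/3$ rather than $1/4$. The paper carries this out and obtains the bound $\frac{N}{\varphi(m)}(\log N)^2(\log m)^2$ for the Type I count, then balances against $\pi(N)-\pi(N/2)$ by taking $\log^3 N\asymp\varphi(m)/\log^2 m$.
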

 For the proof we leverage some of the tools in Elsholtz--Tao \cite{ET}, which paper was
 principally concerned with the number of representations of $4/n$ as a sum of 3 unit
 fractions.  We also prove a version of Theorem \ref{th-mbound} that's weaker, but
 numerically explicit, and in particular we obtain the following result.
 \begin{theorem}
\label{th-explicit}
For each integer $m\ge 6.52\times10^9$ there is a prime $p\in(m^2,2m^2)$ for which
$m/p$ is not the sum of $3$ unit fractions.
\end{theorem}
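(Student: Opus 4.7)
The plan is to argue by contradiction: suppose every prime $p \in (m^2, 2m^2)$ admits a representation $m/p = 1/x+1/y+1/z$ in positive integers, count such ``good'' primes explicitly, and show this count falls strictly below $\pi(2m^2) - \pi(m^2)$ once $m \ge 6.52 \times 10^9$.

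The first step is a Mordell-style case split. Since $p > m$, clearing denominators in $m/p = 1/x+1/y+1/z$ forces $p \mid xyz$, so at least one of $x,y,z$ is a multiple of $p$. Following the framework developed in \cite{ET}, I would partition representations into three types according to whether $p$ divides exactly one, exactly two, or all three of $x,y,z$. In each type, substituting (say) $z = pc$ and clearing produces a polynomial Diophantine equation whose solutions over positive integers are parametrized by a bounded amount of ``small'' integer data $(a,b,c,\ldots)$ together with a congruence of the form $p \equiv r \pmod{q}$, where $q$ is built from $m$ and the auxiliary parameters.

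The second step is an explicit count of primes in $(m^2, 2m^2)$ lying in each such progression. For each admissible modulus $q$ and residue $r$, I would apply an explicit Brun--Titchmarsh inequality to bound the number of qualifying primes, then sum over the admissible parameter space. Type~I (exactly one of $x,y,z$ divisible by $p$) will be the dominant contribution, and the resulting upper bound will have roughly the shape $C_1 \cdot m^2 \sigma_{-1}(m)/\log m$, or more crudely $m^2 (\log\log m)^{O(1)}/\log m$. Types~II and III are much sparser, because the extra divisibility constraints on $p$ place it in a substantially thinner set of residue classes; their contributions should be comfortably lower order.

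Third, I would invoke an explicit prime-counting estimate (of Dusart type) to conclude $\pi(2m^2) - \pi(m^2) \ge m^2/(2\log m) \cdot (1 - \varepsilon(m))$ with a computable error term, and then verify that the total Type~I$+$II$+$III upper bound is strictly less than this lower bound for $m \ge 6.52\times 10^9$. The main obstacle will be explicit constant-tracking: every Brun--Titchmarsh application, every divisor-sum or sum-over-parameters estimate, and every numerical inequality must be sharp enough for the threshold to come out as small as claimed. In particular, the delicate step is the Type~I bound, since relaxing any of the explicit estimates by even a modest factor shifts $6.52\times 10^9$ noticeably, so calibrating this chain of inequalities is precisely where the numerical threshold is determined.
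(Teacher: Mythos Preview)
Your overall architecture—classify solutions by how $p$ divides $x,y,z$, reduce each case to a family of congruences $p\equiv r\pmod q$, count primes in those progressions, and compare to $\pi(2m^2)-\pi(m^2)$—matches the paper. Two points, the second of which is a genuine gap.

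First, a minor correction: your ``Type~III'' (all three of $x,y,z$ divisible by $p$) is vacuous, since then $1/x+1/y+1/z\le 3/p<m/p$. So there are only two cases, and they coincide with the paper's Type~I and Type~II, whose parametrizations are given in Propositions~\ref{proptype1par} and~\ref{proptype2par} (Corollaries~\ref{corI} and~\ref{corII}).

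Second, and more seriously: the paper does \emph{not} use Brun--Titchmarsh for Theorem~\ref{th-explicit}. That route drives the asymptotic Theorem~\ref{th-mboundtoo}, but for the explicit result the paper instead uses the trivial count $\lceil N/(2mad)\rceil$ for integers in a residue class together with explicit pointwise divisor bounds $\tau'_j(n)\le C_j(n-1)^{1/6}$ (Lemma~\ref{lem1}), plus a case split on $\gcd(f,6)$ and the observation (Lemma~\ref{divisors}) that at most half the relevant divisors exceed $\sqrt{jn}$. This yields a Type~I bound of order $N^{4/3}/m^{7/6}\asymp m^{3/2}$ (Proposition~\ref{typeIcount}), a genuine power of $m$ below the prime count $m^2/\log m$. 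Your anticipated bound of shape $C_1\,m^2\sigma_{-1}(m)/\log m$, or $m^2(\log\log m)^{O(1)}/\log m$, does not close the argument: for highly composite $m$ one has $\sigma_{-1}(m)\sim e^{\gamma}\log\log m$, so your upper bound on solvable primes eventually \emph{exceeds} $\pi(2m^2)-\pi(m^2)\sim m^2/(2\log m)$, and no constant tuning repairs this. Even if you tighten the Brun--Titchmarsh route to the form $\frac{N}{\varphi(m)}(\log N)^2(\log m)^2$ as in Section~3, making all constants explicit through \eqref{eq82} would be laborious and would give a threshold far above $6.52\times10^9$. The idea you are missing is to trade the logarithmic saving from sieving for the power saving coming from $\tau(n)\ll n^{1/6}$; that trade is exactly what makes the numerics land where they do.
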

  
Complementing our lower bounds we prove the following upper bound for the 
distribution of exceptions to the Erd\H os--Straus--Schinzel conjecture.
\begin{theorem}
\label{th-v}
There is an absolute positive constant $C$ such that for each pair $m,N$ with 
$4\le m \le \log^2 N$ the number of $n\le N$ with $m/n$ not the sum of $3$
unit fractions is at most $N/\exp(C\log^{2/3}(N)/\varphi(m)^{1/3})$.
\end{theorem}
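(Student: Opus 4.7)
The plan is to adapt Vaughan's 1970 sieve argument \cite{V} for the case $m=4$ and generalize it uniformly in $m$, with the factor $\varphi(m)$ in the bound arising from the density of primes in favorable residue classes modulo $m$.

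First, I would establish a divisor criterion: $m/n$ is a sum of three unit fractions whenever $n$ has a divisor $d$ lying in a prescribed nonempty set of residue classes $\mathcal{R}(m,p) \subset (\Z/p\Z)^*$ for some prime $p$ in an admissible set $\mathcal{P}_m$ defined by finitely many congruence conditions modulo $m$. The standard mechanism is to write
\[
\frac{m}{n} = \frac{1}{a} + \frac{ma - n}{na}
\]
for a suitable $a$ slightly above $n/m$, reducing the problem to expressing the second summand as a sum of two unit fractions, which is possible once $n$ modulo a suitable prime $p$ satisfies a definite splitting condition. Since $\mathcal{P}_m$ is a union of arithmetic progressions modulo $m$, the prime number theorem for arithmetic progressions together with the hypothesis $m \le (\log N)^2$ yields the Mertens-type lower bound
\[
\sum_{\substack{p \in \mathcal{P}_m \\ p \le Y}} \frac{1}{p} \;\ge\; \frac{c \log \log Y}{\varphi(m)} + O(1),
\]
uniformly in the range $Y \le N^{o(1)}$ accessible to Siegel--Walfisz, for an absolute $c > 0$.

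Next, let $E(m,N)$ denote the set of non-representable $n \le N$. By the divisor criterion, every $n \in E(m,N)$ has no divisor lying in any $\mathcal{R}(m,p)$ for admissible $p \le Y$. I would then split $E(m,N)$ according to the number $k$ of prime factors of $n$ drawn from $\mathcal{P}_m \cap [1,Y]$. On one side, a Hardy--Ramanujan / Tur\'an--Kubilius estimate bounds the count of $n \le N$ with fewer than $k$ such prime factors by
\[
\ll N \exp\!\left(- k \log \frac{k\,\varphi(m)}{c \log\log Y}\right);
\]
on the other side, for $n \in E(m,N)$ with at least $k$ such prime factors $p_1, \dots, p_k$, a pigeonhole argument on the $2^k$ squarefree divisors $\prod_{i \in S} p_i$ and their residues modulo the $p_j$ forces at least one of these divisors to land in some $\mathcal{R}(m,p_j)$, a contradiction. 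Choosing $\log Y$ to be a small power of $\log N$ and $k \asymp (\log^2 N / \varphi(m))^{1/3}$ balances the two bounds and produces the claimed saving of $\exp(C(\log^2 N / \varphi(m))^{1/3})$.

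The main obstacle will be the interplay between the divisor criterion and the pigeonhole step: one must arrange for $\mathcal{R}(m,p)$ to have size a positive proportion of $p$, independent of $m$ and of the admissible prime $p$, so that combining residue information across $k$ admissible primes produces at least one good divisor without losing an extra factor of $\varphi(m)$ in the exponent. Secondary technical issues are a uniform Siegel--Walfisz input for the prime sum in the first step and the precise bookkeeping of the Tur\'an--Kubilius constant in the second; both are routine in the range $m \le (\log N)^2$.
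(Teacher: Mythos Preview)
Your outline does not recover the bound in the theorem; both the analytic input and the combinatorial step are too weak.

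The decisive input in Vaughan's argument (and in the paper) is not merely that there is a positive-density set of admissible primes $p$ modulo~$m$, but that \emph{each} such prime $p\equiv -1\pmod m$ supplies many residue classes: one shows that there are at least
\[
f(p)\;=\;\Big\lfloor\tfrac12\sum_{t\mid (p+1)/m}|\mu(t)|\,\tau\!\Big(\tfrac{p+1}{tm}\Big)\Big\rfloor
\]
classes $r\pmod p$ such that $n\equiv r\pmod p$ already forces $m/n$ to be a sum of three unit fractions. On average $f(p)$ is of size $\tau_3((p+1)/m)$, hence of order $(\log p)^2$, and one proves
\[
\sum_{p\le X}\frac{f(p)}{p}\ \asymp\ \frac{(\log X)^2}{\varphi(m)}.
\]
Feeding this into the large sieve and optimising $X=\exp\!\big(A(\log N)^{1/3}\big)$ is exactly what produces the saving $\exp\!\big(C(\log^2 N/\varphi(m))^{1/3}\big)$. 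Your Mertens sum $\sum_{p\in\mathcal P_m,\,p\le Y}1/p\asymp(\log\log Y)/\varphi(m)$ replaces $(\log X)^2$ by $\log\log Y$; even a perfect sieve from that input saves at most a fixed power of $\log N$, nowhere near the claimed $\exp((\log N)^{2/3})$. The missing idea is precisely the multiplicity $f(p)$ of residue classes per prime.

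Separately, the pigeonhole step is not valid as described. Having $2^k$ squarefree divisors built from $p_1,\dots,p_k$ does not force any of them to fall into a fixed subset $\mathcal R(m,p_j)\subset(\Z/p_j\Z)^*$ of positive proportion: those $2^{k-1}$ residues coprime to $p_j$ can all lie in the complement, and no purely combinatorial pigeonhole bridges that gap. Relatedly, your Hardy--Ramanujan bound is pointing the wrong way: with $k\asymp(\log^2 N/\varphi(m))^{1/3}$ and mean $\mu\asymp(\log\log Y)/\varphi(m)$ you have $k\gg\mu$, so almost every $n$ has fewer than $k$ such prime factors and the ``few prime factors'' case is essentially all of $[1,N]$. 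The correct architecture is the large sieve applied directly with the weights $f(p)/(p-f(p))$, as in Vaughan.
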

Exploiting the large sieve, the proof is largely derivative of Vaughan's theorem in \cite{V}.

In our proof of Theorem \ref{th-mbound} we actually show that not only is there one
exceptional $n> \exp(m^{1/3-\epsilon})$, but that most prime values of $n$ near this
bound are exceptions.  This might be contrasted with Theorem \ref{th-v} which implies
that when $n\approx\exp(m^{1/2})$, most values of $n$ and in fact most primes
are not exceptions.  So between $\exp(m^{1/3})$ and $\exp(m^{1/2})$ there is
a transition from ``usually false" to ``usually true".  

The proof of Theorem \ref{th-mbound} suggests that the average number of solutions
for a prime $n=p \ge m$ is $\frac{\log^3 p}{m} (\log\log p)^{O(1)}$. 
If we ignore the $\log\log p $ factor and, as in \cite[Remark 1.1]{ET} with the case $m=4$, 
model the number of solutions at each prime $p$ as a Poisson process with intensity 
$\frac{\log^3 p}{m}$, we would expect any given prime $p$ to have ``probability"
$\exp(-(\log p)^3/m)$ of having no solution. 
This would suggest that most primes $p>\exp(m^{1/3+\epsilon})$ have solutions.
It also indicates that there are many exceptional primes 
$p>\exp(m^{1/2-\epsilon}) $, 
while there are no exceptional primes $p>\exp(m^{1/2+\epsilon})$, when $m$ is sufficiently large.

 We also consider the more general question of whether $m/n$ can be represented
 as the sum of $j$ unit fractions, showing that there are somewhat large
 exceptional values here as well.
 \begin{theorem}
 \label{th-mjbound}
 For each pair of positive integers $j,k$,  there is a number $m(j,k)$ such that for
 each $m\ge m(j,k)$, we have $m/(km+1)$ not the sum of $j$ unit fractions.
 \end{theorem}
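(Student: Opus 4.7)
The plan is to exploit the elementary identity
\[
\frac{m}{km+1} \;=\; \frac{1}{k} - \frac{1}{k(km+1)}.
\]
If $m/(km+1) = 1/x_1 + \cdots + 1/x_j$ for some positive integers $x_i$, then rearranging via this identity gives
\[
\frac{1}{k} \;=\; \frac{1}{x_1} + \cdots + \frac{1}{x_j} + \frac{1}{k(km+1)},
\]
displaying $1/k$ as a sum of $j+1$ positive unit fractions, one of whose denominators is $k(km+1)$.

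Next I would invoke the standard finiteness lemma for Egyptian fractions: for any positive rational $r$ and positive integer $\ell$, only finitely many nondecreasing tuples $(y_1,\ldots,y_\ell)$ of positive integers satisfy $\sum_{i=1}^\ell 1/y_i = r$. This follows by induction on $\ell$, since the smallest term obeys $y_1 \le \ell/r$ (finitely many choices), and each choice reduces the problem to a length-$(\ell-1)$ representation of the positive rational $r - 1/y_1$, to which the inductive hypothesis applies (the base case $\ell=1$ being trivial). Consequently, the set $D(j,k)$ of positive integers that can appear as a denominator in \emph{some} representation of $1/k$ as a sum of $j+1$ unit fractions is finite.

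The identity above forces $k(km+1) \in D(j,k)$, so $k(km+1) \le \max D(j,k)$; this bounds $m$ explicitly in terms of $j$ and $k$, and taking $m(j,k)$ larger than the resulting bound proves the theorem. The only conceptual content is the opening identity, and I do not foresee a genuine obstacle beyond writing it down; the remainder is routine bookkeeping against a classical result on Egyptian fractions.
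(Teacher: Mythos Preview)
Your argument is correct. The identity $\tfrac{m}{km+1}=\tfrac1k-\tfrac1{k(km+1)}$ does exactly what you claim, and the classical finiteness of Egyptian representations of a fixed rational in a fixed number of parts then pins down $m$. One tiny gloss: in the inductive step you should note that $r-1/y_1$ may be zero (or negative), in which case there are simply no further solutions when $\ell\ge2$; this does not affect the conclusion.

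Your route is genuinely different from the paper's. The paper argues topologically: for any strictly decreasing null sequence $(a_i)$ (specialized to $a_i=1/i$), no point $x$ can be approached strictly from below by sums of $j$ terms, because any putative increasing sequence of such sums has, after passing to a subsequence, each coordinate either eventually constant or tending to $0$, and both options are incompatible with the sums strictly increasing. Applying this with $x=1/k$ gives an $\epsilon>0$ with $(1/k-\epsilon,1/k)\cap S_j=\emptyset$, and $m/(km+1)$ lands in that gap for large $m$. Your approach trades this order-theoretic lemma for the explicit algebraic identity plus the standard finiteness lemma; it is shorter, and has the bonus of being effective---in principle one can bound $\max D(j,k)$ (hence $m(j,k)$) via Sylvester-type recursions, whereas the paper's $\epsilon$ is purely existential. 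The paper's lemma, on the other hand, is stated for an arbitrary null sequence and so is slightly more general in scope.
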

 
 In addition, we examine the Erd\H os--Straus--Schinzel conjecture numerically for
 various values of $m$.  
  
 \section{Some basic thoughts}
 For $m,n\in \mathbb{N}$, we consider the equation 
\begin{equation}
\label{ESSn}
\frac m n=\frac1x+\frac1y+\frac1z.
\end{equation}
We say that a solution $(x,y,z) \in \mathbb{N}^3$ of \eqref{ESSn} is of Type I if $n$ divides $x$ but is coprime to $y,z$, and 
of Type II if $n$ divides $y,z$ but is coprime to $x$.  
Note that if $n$ is prime, $n \nmid m$ and $m\ge 4$ then,
up to permuting $x,y,z$, every
solution to \eqref{ESSn} must be of Type I or Type II; this is not necessarily the case when
$n$ is composite.  For example, $5/6=1/3+1/4+1/4$ is not of either type.

The following parametrizations of Type I and Type II solutions, as well as their proofs,
follow the ideas in \cite[Section 2]{ET}, where the case $m=4$ is treated.
Also, see Aigner \cite{A} and Nakayama \cite{N}.


\begin{prop}\label{proptype1par}
Let $n,m \in \mathbb{N}$. There exists a Type I solution $(x,y,z)\in \mathbb{N}^3$ of \eqref{ESSn} if and only if 
there exist $a, d, f \in \mathbb{N}$ with $f\mid ma^2d+1$, $mad\mid n+f$, and $(n+f)/mad$ coprime to $n$. 
\end{prop}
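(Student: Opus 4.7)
The plan is to prove the biconditional in two halves, parametrising the Type I equation through the substitution $x = nw$ and normalising $(y,z)$ by their gcd.

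For the forward direction, I would start with a Type I solution $(x,y,z)$, write $x = nw$ with $\gcd(n,yz)=1$, set $d^\ast = \gcd(y,z)$ and $y = Ad^\ast$, $z = Bd^\ast$ with $\gcd(A,B) = 1$. Clearing denominators in \eqref{ESSn} yields
\[
ABd^\ast(mw-1) = nw(A+B).
\]
The coprimalities $\gcd(AB, A+B) = 1$ and $\gcd(n, ABd^\ast) = 1$ then let me run a short chain of divisibility reductions: first $AB \mid w$, so $w = ABc$; substituting back and dividing, $c \mid d^\ast$, so $d^\ast = ce$; then $e \mid A+B$, so $A+B = e\mu$; what remains is the master relation $cmAB = n\mu + 1$. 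Setting $a := A$, $d := c$, $f := made - n$, the condition $mad \mid n+f$ is immediate with quotient $e$, and $\gcd(e,n) = 1$ follows from $e \mid d^\ast$ and $\gcd(n, d^\ast)=1$. One manipulation of the master relation gives the identities $Bf = na + e$ (hence $f > 0$) and $ma^2d + 1 = \mu f$ (hence $f \mid ma^2d+1$).

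For the converse, given $(a,d,f)$ satisfying the three conditions, I would set
\[
K := (n+f)/mad, \qquad e := (ma^2d+1)/f, \qquad b := Ke - a,
\]
both $K$ and $e$ being positive integers by hypothesis. Eliminating $f$ between $n+f = madK$ and $fe = ma^2d + 1$ yields the pivotal identity $mabd = ne + 1$; this shows $b\cdot mad = ne + 1 > 0$, so $b \in \N$, and provides the last step in the verification
\[
\frac{1}{nabd}+\frac{1}{adK}+\frac{1}{bdK} = \frac{K + n(a+b)}{nabdK} = \frac{K(1+ne)}{nabdK} = \frac{m}{n}
\]
once we substitute $a + b = Ke$. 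So $(x,y,z) := (nabd, adK, bdK)$ is a solution; $n \mid x$ is obvious, and $mabd \equiv 1 \pmod n$ forces each of $a,b,d$ coprime to $n$, so together with $\gcd(K,n)=1$ we get $\gcd(n, yz) = \gcd(n, abd^2K^2) = 1$, confirming Type I.

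The only real obstacle is executing the three-step divisibility chain in the forward direction cleanly; once the normal form $(A,B,c,e,\mu)$ with $cmAB = n\mu + 1$ is in hand, the identities $Bf = na+e$ and $ma^2d+1 = \mu f$ pop out by direct substitution, and the reverse direction is just these identities run backwards. The whole argument parallels \cite[Section~2]{ET} for the case $m=4$.
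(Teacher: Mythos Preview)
Your proof is correct and follows essentially the same approach as the paper's. The converse direction is identical up to renaming (your $K$ is the paper's $c$), and in the forward direction you reach the same master relation via a three-step divisibility chain $AB\mid w$, $c\mid d^\ast$, $e\mid A+B$, whereas the paper extracts $d=\gcd(x/n,y,z)$ first and then applies the pairwise-gcd factorisation $x'=ab$, $y'=ac$, $z'=bc$; the two routes arrive at the same parameters $(a,d,f)$.
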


\begin{proof}
First assume that there exist $a, d, f \in \mathbb{N}$ with $f\mid ma^2d+1$, $mad\mid n+f$, and $(n+f)/mad$ coprime to $n$. 
Define $e:=\frac{ma^2d+1}{f}\in \mathbb{N}$, $c:=\frac{n+f}{mad}\in \mathbb{N}$, so that $c$ is coprime to $n$,
and $b:=ce-a\in \mathbb{N}$, since $ce>a$. Then one easily verifies that $(x,y,z):=(abdn,acd, bcd)$ satisfies \eqref{ESSn},
and that $mabd=ne+1$, which implies $\gcd(n,abd)=1$. Since $c$ is also coprime to $n$,
so are $y,z$, and the solution $(x,y,z)$ is of Type I.

Conversely, assume that  $(x,y,z)\in \mathbb{N}^3$ is a Type I solution of \eqref{ESSn}.
We factor $x = ndx'$, $y = dy'$, $z = dz'$, where $\gcd(x',y',z')=1$. After multiplying \eqref{ESSn}
by $ndx'y'z'$, we get
\begin{equation}\label{eqt11}
mdx'y'z' = y'z' + nx'y' + nx'z'.
\end{equation}
As $y',z'$ are coprime to $n$, we conclude that 
\begin{equation}\label{eqt12}
x'\mid y'z', \quad y'\mid x'z', \quad z'\mid x'y'.
\end{equation}
We claim that this  implies
\begin{equation}\label{eqt13}
x'=ab, \ y'=ac, \ z'=bc,
\end{equation}
where
$$
a=\gcd(x',y'),\quad b=\gcd(x',z'), \quad c=\gcd(y',z').
$$
Indeed, if a prime $p$ divides $x'y'z'$, then $\gcd(x',y',z')=1$ implies that (at least) one of $x',y',z'$ is 
not divisible by $p$, while \eqref{eqt12} implies that the other two, and hence their $\gcd$, are divisible 
by the same power of $p$. 
Substituting \eqref{eqt13} into \eqref{eqt11}, we obtain
\begin{equation}\label{eqt14}
mabcd = n(a+b)+c.
\end{equation}
As $y,z$ are coprime to $n$, $abcd$ is coprime to $n$ and
\eqref{eqt14} shows that $c\mid a+b$. Writing $e:=\frac{a+b}{c}\in \mathbb{N}$ and dividing \eqref{eqt14} by $c$, we have 
$mabd = ne+1$. Define $f:=macd-n$, so that $mad\mid n+f$. Since $(n+f)/mad=c$ and $c$ is coprime to $n$, so is $(n+f)/mad$.
We have $f\mid ma^2d+1$, as
$$
ef = emacd-en=emacd-(mabd-1)=mad(ec-b)+1=ma^2d+1.
$$
\end{proof}

The condition that $(n+f)/mad$ is coprime to $n$ is not necessary when $m\ge 4$ and $n$ is prime:

\begin{corollary}
\label{corI}
Let $m\ge 4$ and $p$ be prime. 
There exists a Type I solution $(x,y,z)\in \mathbb{N}^3$ of \eqref{ESSn} with $n=p$ if and only if 
there exist $a, d, f \in \mathbb{N}$ with $f\mid ma^2d+1$ and $mad\mid p+f$.
\end{corollary}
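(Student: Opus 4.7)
The plan is to read the $(\Rightarrow)$ direction off directly from Proposition \ref{proptype1par} by simply discarding its third (coprimality) condition, and to spend all the effort on the $(\Leftarrow)$ direction, which amounts to showing that the coprimality condition is automatic once $m \geq 4$ and $n = p$ is prime.

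For the converse, given $a, d, f$ with $f \mid ma^2d + 1$ and $mad \mid p + f$, I would feed them into the recipe in the proof of Proposition \ref{proptype1par}: define $e := (ma^2d+1)/f$, $c := (p+f)/mad$, $b := ce - a$, and $(x,y,z) := (abdp, acd, bcd)$. To check that $b$ is a positive integer, I would reduce modulo $mad$: from $ef = ma^2d + 1$ and $p \equiv -f \pmod{mad}$ one gets $ep + 1 \equiv 0 \pmod{mad}$, so $b = (ep+1)/mad$ is integral, and positivity of $b$ then follows because $ep+1 > 0$ is a positive multiple of $mad$. The same congruence packages the identity $mabd = ep + 1$, which forces $\gcd(abd, p) = 1$, and the verification that $1/x + 1/y + 1/z = m/p$ is identical to the one in Proposition \ref{proptype1par}.

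The one substantive step, and the one I expect to be the main obstacle, is showing $p \nmid c$; here both hypotheses of the corollary must enter. Assume for contradiction that $p \mid c$. Then $p$ divides each of $x = abdp$, $y = acd$, $z = bcd$, so writing $x = px'$, $y = py'$, $z = pz'$ with $x', y', z' \in \mathbb{N}$ and multiplying \eqref{ESSn} through by $p$ yields $m = 1/x' + 1/y' + 1/z' \leq 3$, contradicting $m \geq 4$. Primality of $p$ is what lets one pass from the failure of coprimality $\gcd(c, p) > 1$ to the full divisibility $p \mid c$ needed to put a factor of $p$ into each of $y$ and $z$; without primality only a proper divisor of $p$ need appear, and the $m \leq 3$ bound loses its teeth. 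Once $p \nmid c$ is in hand, $\gcd(y, p) = \gcd(z, p) = 1$ follows from $\gcd(abd, p) = 1$, so $(x, y, z)$ is indeed a Type I solution and the proof is complete.
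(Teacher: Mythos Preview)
Your proof is correct and follows essentially the same route as the paper: construct $(x,y,z)$ via the recipe of Proposition~\ref{proptype1par}, use the identity $mabd = ep+1$ to get $\gcd(abd,p)=1$, and then rule out $p\mid c$ by the size argument $1/x+1/y+1/z \le 3/p < m/p$. Your added verification that $b=(ep+1)/mad$ is a positive integer is a helpful explicit detail that the paper leaves implicit by referring back to the proposition's proof.
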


\begin{proof}
Assuming there exist $a, d, f \in \mathbb{N}$ with $f\mid ma^2d+1$ and $mad\mid n+f$, the solution $(x,y,z)$ is 
constructed as in the proof of Proposition \ref{proptype1par}, and we find again that $\gcd(n,abd)=1$.
Since $n$ is prime, if $n$ is not coprime to $c$ then $n\mid c$, hence $n\mid y$ and $n\mid z$, and
$1/x+1/y+1/z\le 3/n<m/n$. Thus $n$ must be coprime to $c$, $\gcd(n,abcd)=1$, and $(x,y,z)$ is of Type I.

The converse follows from Proposition \ref{proptype1par}.
\end{proof}


\begin{prop}\label{proptype2par}
Let $n,m \in \mathbb{N}$. There exists a Type II solution $(x,y,z)\in \mathbb{N}^3$ of \eqref{ESSn} if and only if 
there exist $a,b,e \in \mathbb{N}$ with $e\mid a+b$, $mab\mid n+e$, and $(n+e)/m$ coprime to $n$. 
\end{prop}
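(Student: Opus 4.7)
The plan is to mirror the proof of Proposition \ref{proptype1par} step by step, with the roles of $x$ and $y,z$ interchanged so that the factor of $n$ now sits with $y,z$ instead of $x$. This is a natural ``swap'', and both directions should follow by the same algebraic moves used in the Type I case.

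For the forward direction, given $a,b,e\in\mathbb{N}$ satisfying the three stated conditions, I would set $c:=(n+e)/(mab)\in\mathbb{N}$ and $d:=(a+b)/e\in\mathbb{N}$ and propose the candidate $(x,y,z):=(abc,\,acdn,\,bcdn)$. Combining the fractions and using the identities $a+b=de$ and $n+e=mabc$ collapses $1/x+1/y+1/z$ to $m/n$; the divisibilities $n\mid y$ and $n\mid z$ are built into the construction; and $\gcd(n,x)=1$ follows because $x=abc=(n+e)/m$, which is coprime to $n$ by the third hypothesis. So $(x,y,z)$ is a Type II solution.

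For the converse, given a Type II solution $(x,y,z)$ I would write $y=n\bar y$, $z=n\bar z$, set $d:=\gcd(x,\bar y,\bar z)$, and factor $x=dx'$, $\bar y=dy'$, $\bar z=dz'$ with $\gcd(x',y',z')=1$. Clearing denominators in \eqref{ESSn} gives
\[
mdx'y'z' = ny'z' + x'z' + x'y',
\]
and $\gcd(x,n)=1$ implies $\gcd(x',n)=1$, so this equation yields exactly the same three divisibilities $x'\mid y'z'$, $y'\mid x'z'$, $z'\mid x'y'$ as in Proposition \ref{proptype1par}. The identical prime-by-prime argument then produces $x'=ab$, $y'=ac$, $z'=bc$ with $a,b,c$ the pairwise gcds. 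Substituting and dividing by $abc$ turns the display into $mabcd = nc+(a+b)$, forcing $c\mid a+b$. Setting $e:=mabd-n=(a+b)/c$ then delivers all three required conclusions: $e\mid a+b$ (with quotient $c$), $mab\mid n+e$ because $n+e=mabd$, and $(n+e)/m=abd$ is coprime to $n$ because $x=dab$ is.

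The computational steps (the fraction identity in the forward direction and the prime-by-prime factorization in the backward direction) are essentially transcribed from the Type I proof, so the main friction is purely bookkeeping. The subtle point is that the parameter triple in the statement is $(a,b,e)$, while the gcd-style parameter $d$ that was one of the three free variables in Proposition \ref{proptype1par} is now a derived quantity; correspondingly the auxiliary letters $c$ and $d$ trade roles between the forward and backward maps. I expect that minor role-swap to be the only place where a careless calculation could produce an off-by-one in the parameters.
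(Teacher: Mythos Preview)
Your proposal is correct and follows essentially the same argument as the paper. The only difference is cosmetic: in the forward direction you name the derived quantities $c=(n+e)/(mab)$ and $d=(a+b)/e$, while the paper uses the opposite convention $c=(a+b)/e$, $d=(n+e)/(mab)$; since the solution is $(abd,acdn,bcdn)$ in the paper's letters and $(abc,acdn,bcdn)$ in yours, the constructed triple and the verification that $(n+e)/m=x$ are identical.
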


\begin{proof}
First assume that there exist $a,b,e \in \mathbb{N}$ with $e\mid a+b$ and $mab\mid n+e$, and$(n+e)/m$ coprime to $n$. 
Define $c:=\frac{a+b}{e}\in \mathbb{N}$ and $d:=\frac{n+e}{mab}\in \mathbb{N}$.
Then one easily verifies that $(x,y,z):=(abd,acdn,bcdn)$ satisfies \eqref{ESSn}. 
Since $x:=abd=(n+e)/m$, $x$ is coprime to $n$ and $(x,y,z)$ is a Type II solution.

Conversely, assume that  $(x,y,z)\in \mathbb{N}^3$ is a Type II solution of \eqref{ESSn}.
We factor $x = dx'$, $y = ndy'$, $z = ndz'$, where $\gcd(x',y',z')=1$. After multiplying \eqref{ESSn}
by $ndx'y'z'$, we get
\begin{equation}\label{eqt21}
mdx'y'z' = ny'z' + x'y' + x'z'.
\end{equation}
As $x'$ is coprime to $n$, we conclude that 
$x'\mid y'z'$, $y'\mid x'z'$, $z'\mid x'y'$.
As in the proof of Proposition \ref{proptype1par}, this implies \eqref{eqt13}.
Substituting \eqref{eqt13} into \eqref{eqt21}, we obtain
\begin{equation}\label{eqt24}
mabcd = a+b+nc,
\end{equation}
which shows that $c\mid a+b$. Define $e:=\frac{a+b}{c}\in \mathbb{N}$, so that $e\mid a+b$. Dividing \eqref{eqt24} by $c$, we have 
$mabd = e+n,$
that is $mab\mid e+n$. Since $\frac{e+n}{m}=abd=x$ and $x$ is coprime to $n$, so is $ \frac{e+n}{m}$.
\end{proof}

The condition that $(n+e)/m$ is coprime to $n$ is not necessary when $m\ge 4$ and $n$ is prime:

\begin{corollary}
\label{corII}
Let $m\ge 4$ and $n$ be prime. 
There exists a Type II solution $(x,y,z)\in \mathbb{N}^3$ of \eqref{ESSn} if and only if 
there exist $a,b,e \in \mathbb{N}$ with $e\mid a+b$ and $mab\mid n+e$.
\end{corollary}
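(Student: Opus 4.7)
The plan is to mirror the argument used for Corollary \ref{corI} essentially verbatim, with the appropriate substitutions for the Type II parametrization. The ``only if'' direction is immediate from Proposition \ref{proptype2par}, so the whole content lies in the ``if'' direction, namely showing that when $n$ is prime and $m\ge 4$, the coprimality hypothesis ``$(n+e)/m$ coprime to $n$'' is actually forced by the divisibility conditions $e\mid a+b$ and $mab\mid n+e$ alone.

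My plan is as follows. Starting from $a,b,e\in\mathbb{N}$ with $e\mid a+b$ and $mab\mid n+e$, I would repeat the construction from the proof of Proposition \ref{proptype2par}: set $c:=(a+b)/e$, $d:=(n+e)/(mab)$, and $(x,y,z):=(abd,acdn,bcdn)$. The same one-line verification carried out in that proof shows that $(x,y,z)$ satisfies \eqref{ESSn}, independently of any coprimality hypothesis, and that $x=abd=(n+e)/m$. So I only need to prove that $\gcd(n,x)=1$; once that is done, $(x,y,z)$ is a Type II solution in the sense of the definition, and the proposition does the rest.

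To prove $\gcd(n,x)=1$, I would argue by contradiction and use the primality of $n$. If $\gcd(n,x)\neq 1$, then $n\mid x$, so $x\ge n$. Since by construction $y=ncd\cdot a$ and $z=ncd\cdot b$ are also divisible by $n$, we would have
\[
\frac{m}{n}=\frac{1}{x}+\frac{1}{y}+\frac{1}{z}\le \frac{3}{n},
\]
contradicting $m\ge 4$. Hence $n\nmid x$, and primality forces $\gcd(n,x)=1$, as required.

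I do not foresee any real obstacle: the argument is a direct transcription of the proof of Corollary \ref{corI}, with Type I replaced by Type II and $abd$ in place of $c$ as the factor one needs to show is coprime to $n$. The only point that requires a moment's care is checking that all three of $x,y,z$ are indeed $\ge n$ so that the bound $1/x+1/y+1/z\le 3/n$ is valid; this is immediate because $y$ and $z$ carry an explicit factor of $n$ while $x\ge n$ comes from the assumption $n\mid x$ together with $n$ prime.
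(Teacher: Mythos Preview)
Your proposal is correct and follows essentially the same approach as the paper: construct $(x,y,z)$ exactly as in the proof of Proposition~\ref{proptype2par}, then observe that if $n\mid x$ one would get $1/x+1/y+1/z\le 3/n<m/n$, contradicting $m\ge4$, so primality of $n$ forces $\gcd(n,x)=1$. The paper's proof is just a terser version of what you wrote.
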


\begin{proof}
Assuming there exist $a,b,e \in \mathbb{N}$ with $e\mid a+b$ and $mab\mid n+e$, the solution $(x,y,z)$ is 
constructed as in the proof of Proposition \ref{proptype2par}. Since $n$ is prime, if $n$ is not coprime to $x$ then $n\mid x$ and
$1/x+1/y+1/z\le 3/n<m/n$. Thus $n$ must be coprime to $x$ and $(x,y,z)$ is of Type II.

The converse follows from Proposition \ref{proptype2par}.
\end{proof}

\section{A lower bound for the number of exceptional primes}

We will make use of the Brun--Titchmarsh inequality, which states that the number of primes up to $N$ that are congruent to $a \pmod q$ is
$$
\pi(N;q,a) \ll \frac{N}{\varphi(q) \log (N/q)} \qquad (q<N),
$$
where $\varphi(q)$ is the Euler totient function.   We record \eqref{ESSn} in the case that
$n=p$, a prime:
\begin{equation}
\label{ESSp}
\frac mp = \frac1x+\frac1y+\frac1z.
\end{equation}

\begin{theorem}
\label{th-mboundtoo}
There is a constant $c>0$, such that for every integer $m\ge 8$, there are more than 
$$
\exp\{c\, \varphi(m)^{1/3}/(\log m)^{2/3}\}
$$ 
primes $p$  
for which \eqref{ESSp} has no solution in natural numbers $x,y,z$.
\end{theorem}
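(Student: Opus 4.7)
The plan is to adapt the Elsholtz--Tao approach: fix a small absolute constant $C > 0$ and set $N := \exp\{C\varphi(m)^{1/3}/(\log m)^{2/3}\}$. Let $S(N)$ be the set of primes $p \le N$ for which \eqref{ESSp} is solvable. The goal is to show $|S(N)| \le \pi(N)/2$, after which at least $\pi(N)/2 \gg N/\log N > \exp\{c\varphi(m)^{1/3}/(\log m)^{2/3}\}$ primes $\le N$ must be exceptional, for any fixed $c < C$.

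For a prime $p > m$, every representation of $m/p$ as a sum of three unit fractions is, up to permutation, of Type~I or Type~II. Hence by Corollaries~\ref{corI} and~\ref{corII}, each $p \in S(N) \cap (m, N]$ lies in a residue class of the form $p \equiv -f \pmod{mad}$ for some $(a, d, f)$ with $f \mid ma^2d + 1$ (Type~I), or $p \equiv -e \pmod{mab}$ for some $(a, b, e)$ with $e \mid a + b$ (Type~II). For tuples with modulus at most $N^{1/2}$, I would apply the Brun--Titchmarsh inequality, $\pi(N; mad, -f) \ll N/(\varphi(mad) \log N)$, and then sum over the $\tau(ma^2d + 1)$ admissible values of $f$ (respectively the $\tau(a + b)$ values of $e$), which reduces the problem to a divisor sum.

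The key technical estimate is
\[
\sum_{\substack{a, d \ge 1 \\ mad \le N}} \frac{\tau(ma^2d + 1)}{\varphi(mad)} \ll \frac{(\log N)^3 (\log m)^{O(1)}}{\varphi(m)},
\]
together with the analogous Type~II bound $\sum_{mab \le N} \tau(a+b)/\varphi(mab)$. I would prove these by extracting $\varphi(m)^{-1}$ through the multiplicativity inequality $\varphi(mad) \ge \varphi(m) \varphi(ad)$, invoking a Shiu-type bound $\sum_{d \le T} \tau(ma^2d + 1) \ll T \log T$ (uniform in $a$, since $\gcd(ma^2, 1) = 1$), applying partial summation to gain a logarithm from the weight $1/d$, and summing over $a$ for a third logarithm. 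Combined with Brun--Titchmarsh this yields $|S(N)| \ll N(\log N)^2 (\log m)^{O(1)} / \varphi(m)$; for the chosen $N$, the ratio $|S(N)|/(\pi(N)/2)$ reduces, up to constants, to $C^3 (\log m)^{O(1) - 2}$, which is below $1$ once $C$ is small and $m$ is above some absolute threshold $m_0$. The finitely many cases $8 \le m < m_0$ can be absorbed by further shrinking $c$, since then $\exp\{c\varphi(m)^{1/3}/(\log m)^{2/3}\}$ is bounded and only finitely many exceptional primes need be exhibited.

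The main obstacle is the uniform-in-$m$ divisor-sum estimate: Shiu-type bounds for $\tau$ along the progression $n \equiv 1 \pmod{ma^2}$ must be applied with explicit control on how implied constants depend on the modulus, since a crude bound $\tau(n) \ll n^\epsilon$ would destroy the structure and give the wrong exponent. A secondary issue is the tail of tuples with modulus $mad > N^{1/2}$, where Brun--Titchmarsh is not effective; there each tuple contributes at most $O(N/(mad) + 1)$ primes, and the constraints $f \le ma^2d + 1$ and $e \le a + b$ keep the combined contribution negligible. The conservative exponent $(\log m)^{2/3}$ in the statement provides comfortable slack compared with what this argument naturally produces.
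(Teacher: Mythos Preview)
Your overall strategy matches the paper's: use the Type~I/II parametrizations to place each solvable prime in a congruence class $p\equiv -f\pmod{mad}$ or $p\equiv -e\pmod{mab}$, count via Brun--Titchmarsh, and bound the resulting divisor sums uniformly in $m$. The paper obtains the Type~I count $\ll \frac{N}{\varphi(m)}(\log N)^2(\log m)^2$ via the analogue of \cite[(8.2)]{ET} together with a dyadic decomposition, and the Type~II count $\ll \frac{N}{\varphi(m)}(\log N)^2\log\log N$, in line with your claimed $|S(N)|\ll N(\log N)^2(\log m)^{O(1)}/\varphi(m)$.

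There is, however, a real gap in your tail treatment for Type~I. Your observation $e\le a+b$ does yield $mab\le \frac{m}{m-2}N$ for Type~II, exactly as in the paper's \eqref{type2mabub}. But the Type~I constraint $f\le ma^2d+1$ does \emph{not} bound the modulus: since $ma^2d+1=a(mad)+1\ge mad$ for every $a\ge 1$, the inequality $mad\le p+f\le N+ma^2d+1$ is vacuous. Consequently your tail sum $\sum_{mad>N^{1/2}}\tau(ma^2d+1)\cdot O(1)$ ranges over infinitely many pairs $(a,d)$ and diverges. The paper fills this hole with a separate lemma (Lemma~\ref{madub}): tracing the parametrization $(x,y,z)=(abdp,acd,bcd)$ with $a\le b$ and using $ef=ma^2d+1$, one shows that any Type~I solution with $p\le N$ has $mad\le 2p+1\le 2N+1$, so only finitely many moduli enter the count. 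This a~priori bound is the one structural ingredient your sketch is missing; once it is in place, the rest of your outline goes through essentially as written.
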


The proof follows the ideas in \cite[Sections 8, 9]{ET}, generalizing from $m=4$ to general $m$. 

\begin{proof}
Note that \eqref{ESSp} cannot be solved when $p=2,3$ for $m=8$ and $m\ge10$,
nor when $p=2,5$ for $m=9$, so
 we may assume that $m$ is large. 
When $p$ is prime, then all solutions to \eqref{ESSp} are of Type I or Type II, as discussed
above.  If $(x,y,z)$ is a solution to \eqref{ESSp} of Type I, Corollary \ref{corI}
shows that there are natural numbers $a,d,f$ such that
$$
p\equiv -f \pmod{mad}, \qquad f\mid ma^2 d+1.
$$
By Lemma \ref{madub}, the modulus satisfies $mad \le 3p \le 3N$, provided $p\le N$.
For given $m,a,d,f$, the number of primes $p\le N$ satisfying $p\equiv - f \pmod{ mad}$ is 
$$ \ll \frac{N}{\varphi(mad) \log(2+ \frac{N}{mad})} \le \frac{N}{\varphi(m) \varphi(ad) \log(2+ \frac{N}{mad})} .$$
This follows from Brun--Titchmarsh when $mad \le N/2$, while the count is clearly $O(1)$ if $N/2<mad \le 3N$. 
The number of $p\le N$ covered by these congruences, by varying the parameters $a, d, f$, is 
\begin{equation}\label{type1ub}
\ll   \frac{N}{\varphi(m)} \sum_{a,d: ad \le 3N/m}\frac{\tau(ma^2 d +1)}{ \varphi(ad) \log(2+ \frac{N}{mad})} .
\end{equation}
The analogue of the estimate \cite[Eq. (8.2)]{ET} is  
\begin{equation}\label{eq82}
\sum_{a,d: X/2 \le ad \le X}\frac{\tau(ma^2 d +1)}{ \varphi(ad) } \ll \log^2 X \log m \qquad (X,m \ge 2,\ m\ll X^{O(1)}),
\end{equation}
which is proved just like in \cite{ET} with $m$ replacing $4$.
Splitting the sum in \eqref{type1ub} into dyadic intervals $\frac{3N}{m}2^{-j-1}\le ad \le \frac{3N}{m}2^{-j}$, 
the contribution to \eqref{type1ub} from $j$ with $m^{1/6} \le \frac{3N}{m}2^{-j}$ is
$$
\ll \frac{N}{\varphi(m)} \log^2 N \log m \sum_{ j \ll \log N} \frac{1}{j} \ll \frac{N}{\varphi(m)} \log^2 N \log\log N \log m ,
$$
by \eqref{eq82}.
Since $\tau(ma^2d+1)\ll (mad)^{1/6}$, 
the contribution to \eqref{type1ub} from $j$ with $m^{1/6} > \frac{3N}{m}2^{-j}$, that is $ad < m^{1/6}$, is
$$
\ll \frac{N}{\varphi(m)}  \sum_{a,d: ad \le m^{1/6} }m^{1/6} \ll \frac{N}{\varphi(m)}  m^{1/2}.
$$
Assuming that $N$ is chosen so that $e^{m^{1/4}} \ll N<e^m$, the expression in \eqref{type1ub}, 
and hence the number of primes  $p\le N$ covered by Type I solutions, is
$$
\ll \frac{N}{\varphi(m)}\log^2 N \log^2 m.
$$
We now specify $N=N(m)$ as  the solution to
$$\varphi(m)/\log^2 m= C \log^3 N ,$$ 
for some sufficiently large constant $C$, noting that this is consistent with $e^{m^{1/4}} \ll N<e^m$ when $C\ge1$.
Then most primes $p\in (N/2,N]$
are not covered by these congruences, and thus have no Type I solution to \eqref{ESSp}.

It remains to count the number of primes $p\le N$ covered by Type II solutions.
If $(x,y,z)$ is a solution to \eqref{ESSp} of Type II, Corollary \ref{corII} shows that there are natural numbers $a,b,e$ with
$$
p\equiv - e \pmod{mab}, \qquad e\mid a+b.
$$
Note that $\gcd(a,b)=1$ follows from $\gcd(x',y',z')=1$ in the proof of Proposition \ref{proptype2par}.
Since $e\mid a+b$, we have $e\le a+b \le 2ab$. And $mab\mid p+e$ implies $mab \le p+e \le p + 2ab$, so $(m-2) ab \le p$ and 
\begin{equation}\label{type2mabub}
mab \le p \frac{m}{m-2} \le 2p \le 2N.
\end{equation}
For given $m,a,b,e$, the number of primes $p\le N$ satisfying $p\equiv - e \pmod{mab}$ is 
$$
\ll \frac{N}{\varphi(mab) \log(2+\frac{N}{mab})}\le \frac{N}{\varphi(m) \varphi(ab) \log(2+\frac{N}{mab})},
$$
again by Brun--Titchmarsh if $mab\le N/2$ and trivially if $N/2 < mab \le 2N$. 
The number of primes $p\le N$ that can be covered by these congruences, by varying the parameters $a, b, e$, is
$$
\ll \frac{N}{\varphi(m)} \sum_{a,b: ab \le 2N/m \atop (a,b)=1} \frac{\tau(a+b)}{\varphi(ab)   \log(2+\frac{N}{mab})}.
$$
Splitting this sum into dyadic intervals $2^{j-1}< \frac{N}{mab} \le 2^{j}$,
and estimating the resulting sums as in the last paragraph of \cite[Section 9]{ET}, we find that  
the number of primes $p\le N$ covered by these congruences is 
$$
\ll \frac{N}{\varphi(m)} \log^2 N \log\log N.
$$
Thus, most primes $p\in (N/2,N]$ are covered neither by Type I nor by Type II congruences if $\varphi(m)/\log^2 m= C \log^3 N$ and $C$ is large enough, that is
$$
N = \exp\{ (\varphi(m)/C\log^2 m)^{1/3}\}.
$$ 
The result now follows with $c=1/(2C^{1/3})$,
since we have $\frac{N}{4\log N} >\sqrt{N}= \exp\{c (\varphi(m)/\log^2 m)^{1/3}\}$.
\end{proof}

To see Theorem \ref{th-mbound},  since $\varphi(m)\gg m/\log\log m$, it follows 
from Theorem \ref{th-mboundtoo} that for each $\epsilon>0$
and $m\ge m(\epsilon)$, we have more than $\exp(m^{1/3-\epsilon})$ primes $p$
where \eqref{ESSp} has no solution.

\section{An upper bound}
In this section we prove Theorem \ref{th-v}.
Our proof largely follows the argument in Vaughan \cite{V}.

For $m\ge4$ and a prime $p\equiv-1\pmod m$, let $f(p)=f_m(p)$ denote the greatest
integer that is at most
\begin{equation}
\label{eq:vsum}
\frac12 \sum_{t\,|\,(p+1)/m}|\mu(t)|\tau\Big(\frac{p+1}{tm}\Big).
\end{equation}
For other primes $p$ we let $f(p)=0$.  As shown in \cite{V} for each $p$ there
are at least $f(p)$ residue classes mod $p$ such that if $n$ lies in one of them, then
$m/n$ is a sum of 3 unit fractions.  

The strategy is to use the large sieve to
show that the number of $n\le N$ lying outside of these $f(p)$ residue classes
mod $p$ for each $p$ is bounded above by the bound in Theorem \ref{th-v}.
To achieve this, we first establish the following lemma.

\begin{lemma}
\label{lem-sum}
For $m\le(\log x)^{O(1)}$ we have
\[
\sum_{p\le x}\frac{f(p)}{p}\asymp \frac1{\varphi(m)}(\log x)^2.
\]
\end{lemma}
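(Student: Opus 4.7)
The plan is to identify the quantity inside $f(p)$ with a classical divisor function and then reduce the weighted sum to a Titchmarsh-divisor type estimate.

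First, I would verify the identity $\sum_{t\mid N}|\mu(t)|\tau(N/t)=\tau(N^2)$ for every $N\in\mathbb{N}$, which holds by multiplicativity in $N$ together with the prime-power check (both sides equal $2k+1$ at $p^k$). Hence, when $p\equiv -1\pmod m$ and $N=(p+1)/m$, one has $f(p)=\lfloor\tau(N^2)/2\rfloor = \tau(N^2)/2+O(1)$. Consequently
\[
\sum_{p\le x}\frac{f(p)}{p}=\tfrac12\,\Sigma + O\!\left(\sum_{\substack{p\le x\\ p\equiv -1\pmod m}}\frac{1}{p}\right),
\]
where $\Sigma:=\sum_{p\le x,\,p\equiv -1\pmod m}\tau\bigl(((p+1)/m)^{2}\bigr)/p$. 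The error is $O(\log\log x/\varphi(m))$ by Mertens's theorem in arithmetic progressions, which is absorbed in the claimed main term.

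Next, I would use the pair-counting identity $\tau(N^{2})=\#\{(a,b):\gcd(a,b)=1,\ ab\mid N\}$ (also multiplicative, and equal to $2k+1$ at $p^{k}$) to write
\[
\Sigma=\sum_{\substack{a,b\ge 1\\ \gcd(a,b)=1}}\ \sum_{\substack{p\le x\\ p\equiv -1\pmod{mab}}}\frac{1}{p}.
\]
For the upper bound I would apply the Brun--Titchmarsh inequality plus partial summation to each inner sum and treat the outer sum by dyadic decomposition in $mab$, exploiting $\varphi(mab)\ge \varphi(m)\varphi(ab)$, the fact that $n$ admits exactly $2^{\omega(n)}$ coprime factorisations $n=ab$, and the mean-value estimate $\sum_{n\le z}2^{\omega(n)}/\varphi(n)\asymp(\log z)^{2}$. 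For the lower bound I would appeal to the Siegel--Walfisz theorem (and Bombieri--Vinogradov for the larger moduli) to replace $\pi(y;mab,-1)$ by the expected value $y/(\varphi(mab)\log y)$, restricted to pairs with $mab\le x^{1/2}/(\log x)^{C}$, and sum these contributions. An essentially equivalent route is to first estimate $T(y):=\sum_{p\le y,\,p\equiv -1\pmod m}\tau(((p+1)/m)^{2})$, show $T(y)\asymp y\log y/\varphi(m)$ by the same methods, and then recover $\Sigma\asymp(\log x)^{2}/\varphi(m)$ by partial summation against $1/p$.

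The principal technical obstacle is obtaining sharp bounds uniformly in $m$ in the range $m\le(\log x)^{O(1)}$. In the upper bound this means controlling the Brun--Titchmarsh-weighted sum $\sum_{n\le z}2^{\omega(n)}/(\varphi(n)\log((x/m)/n))$ carefully (via integration by parts against the partial-sum function rather than a crude dyadic bound) so as not to accumulate spurious $\log\log x$ factors; in the lower bound it requires a prime-counting result in arithmetic progressions that is uniform in the modulus $mab$, which Bombieri--Vinogradov supplies on average over $a,b$. Once these ingredients are in place, combining Step~1 with $\Sigma\asymp(\log x)^{2}/\varphi(m)$ yields the lemma.
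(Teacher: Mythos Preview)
Your identities $\sum_{t\mid N}|\mu(t)|\,\tau(N/t)=\tau(N^{2})$ and $\tau(N^{2})=\#\{(a,b):\gcd(a,b)=1,\ ab\mid N\}=\sum_{n\mid N}2^{\omega(n)}$ are correct, and your lower-bound plan via Bombieri--Vinogradov is essentially the paper's. The gap is in the upper bound.

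The Brun--Titchmarsh-weighted sum you isolate, $\sum_{n\le z}2^{\omega(n)}/\bigl(\varphi(n)\log(z/n)\bigr)$ with $z=x/m$, is \emph{not} $O((\log z)^{2})$: it is genuinely of order $(\log z)\log\log z$, and integration by parts does not improve on the dyadic estimate. With $L=\log z$ and $S(t):=\sum_{n\le t}2^{\omega(n)}/\varphi(n)\sim c(\log t)^{2}$, Abel summation gives
\[
\sum_{n\le z}\frac{2^{\omega(n)}}{\varphi(n)\,(\log(z/n)+1)}
= S(z)-\int_{0}^{L}\frac{S(e^{u})\,du}{(L-u+1)^{2}}
\sim cL^{2}-c\bigl(L^{2}+2L-2(L+1)\log(L+1)\bigr)\sim 2cL\log L.
\]
Equivalently, bounding each inner sum by $\sum_{p\le x,\,p\equiv-1\,(q)}1/p\ll(\log\log x)/\varphi(q)$ and summing over $q=mn$ gives the same surplus factor. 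Either way your route yields only $\sum_{p\le x}f(p)/p\ll(\log x)^{2}\log\log x/\varphi(m)$, and the claimed $\asymp$ fails. The extra $\log\log$ is intrinsic to letting the modulus $mn$ range all the way to $x$, where the Brun--Titchmarsh denominator $\log(x/mn)$ degenerates. The paper avoids this by bounding $f(p)\le\tau_{3}((p+1)/m)$ and invoking the elementary inequality $\tau_{3}(n)\le 3\sum_{d\mid n,\,d\le n^{2/3}}\tau(d)$, which caps the modulus at $dm\le x^{2/3+o(1)}$ so that $\log(x/dm)\gg\log x$ uniformly. In your parametrisation the analogous fix is $\tau(N^{2})=\sum_{n\mid N}2^{\omega(n)}\le\sum_{n\mid N}\tau(n)=\tau_{3}(N)$ followed by the same truncation; without some device restricting the size of the modulus, the sharp upper bound does not follow.
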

\begin{proof}
Via partial summation it suffices to show that
\begin{equation}
\label{eq-ps}
\sum_{p\le x}f(p)\asymp \frac1{\varphi(m)}x\log x.
\end{equation}
For the upper bound we use the simple inequality
\[
\tau_3(n)\le3\sum_{\substack{d\,|\,n\\d\le n^{2/3}}}\tau(d),
\]
where $\tau_3(n)$ is the number of triples $a,b,c$ of integers with $abc=n$
(cf. Koukoulopoulos \cite[Ex. 20.2]{K}).  Then for a prime $p\equiv-1\pmod m$,
\[
f(p)< \tau_3((p+1)/m)\le 3\sum_{\substack{d\,|\,(p+1)/m\\d\le p^{2/3}}}\tau(d).
\]
Thus, via the Brun--Titchmarsh inequality and our upper  bound on $m$,
\begin{align*}
\sum_{p\le x}f(p)&\ll\sum_{d\le x^{2/3}}\tau(d)\pi(x;dm,-1)\ll\sum_{d\le x^{2/3}}\frac {x\tau(d)}{\varphi(dm)\log x}\\
&\le\frac x{\varphi(m)\log x}\sum_{d\le x^{2/3}}\frac{\tau(d)}{\varphi(d)}\ll \frac1{\varphi(m)}x\log x.
\end{align*}

For the lower bound first note that for an integer $n\ge2$, one has $\lfloor n/2\rfloor\ge n/3$.
If $p\equiv-1\pmod m$ and $p+1>m$, then the sum in
\eqref{eq:vsum} is $\ge2$, so that
\begin{align*}
\sum_{p\le x}f(p)&\ge\frac13\sum_{\substack{p\le x\\p\,\equiv\,-1\pmod m\\p+1>m}}\sum_{dt\,|\,(p+1)/m}|\mu(t)|\\
&\ge
\frac13\sum_{\substack{d,t\le x^{1/6}\\dt>1}}|\mu(t)|\sum_{\substack{p\le x\\p\,\equiv\,-1\kern-5pt\pmod{mdt}}}1\\
&\ge \frac13\sum_{\substack{d,t\le x^{1/6}\\1\le\Omega(dt)\le3\log\log x}}|\mu(t)|\sum_{\substack{p\le x\\p\,\equiv\,-1\kern-5pt\pmod{mdt}}}1,
\end{align*}
where $\Omega(n)$ is the total number of prime factors of $n$ with multiplicity.
We now use the Bombieri--Vinogradov theorem noting that the number of triples $m,d,t$
with given product $q$ is at most $(\log x)^{O(1)}$.  Thus, 
\[
\sum_{p\le x}f(p)\gg
\sum_{\substack{d,t\le x^{1/6}\\1\le \Omega(dt)\le3\log\log x}}\frac{|\mu(t)|x}{\varphi(mdt)\log x}
\gg \frac1{\varphi(m)}x\log x,
\]
completing the proof of \eqref{eq-ps} and the lemma.
\end{proof}

 Let $N$ be large, let $X\le N^{1/2}$ be a quantity specified later, and let $P=\prod_{p\le X}p$.
We now employ the large sieve.  Let
\[
S=\sum_{\substack{s\le N^{1/2}\\s\,|\,P}}|\mu(s)|\prod_{p\,|\,s}\frac{f(p)}{p-f(p)}.
\]
The number of $n\le N$ that avoid the $f(p)$ residue classes mod $p$ for each $p\le X$
is bounded above by $4N/S$, so our task is to get a lower bound for $S$.

 Let 
\[
G=\sum_{s\,|\,P}\prod_{p\,|\,s}\frac{f(p)}{p-f(p)}.
\]
For any $v\ge0$,
\[
G-S=\sum_{\substack{s>N^{1/2}\\s\,|\,P}}\prod_{p\,|\,s}\frac{f(p)}{p-f(p)}\\
\le N^{-v/2}\sum_{s\,|\,P}s^v\prod_{p\,|\,s}\frac{f(p)}{p-f(p)}.
\]
Thus,
\begin{align*}
\frac{G-S}G&\le N^{-v/2}\prod_{p\le X}\Big(1+p^v\frac{f(p)}{p-f(p)}\Big)\Big(\frac{p-f(p)}{p}\Big)\\
&= N^{-v/2}\prod_{p\le X}\Big(1+\frac{(p^v-1)f(p)}{p}\Big).
\end{align*}
We choose $v=1/\log X$, so that
\begin{align*}
\frac{G-S}G&\le\exp\Big(-\frac{\log N}{2\log X}+\sum_{p\le X}\frac{(e-1)f(p)}p\Big)\\
&\le\exp\Big(-\frac{\log N}{2\log X}+\frac{(e-1)C_2}{\varphi(m)}\log^2X\Big),
\end{align*}
where $C_2$ is the upper bound constant  implied in Lemma \ref{lem-sum}.
Let
\[
A=\frac12\Big(\frac{\varphi(m)}{(e-1)C_2}\Big)^{1/3}
\]
and choose 
\[
X=\exp\big(A(\log N)^{1/3}\big).
\]
Then
\begin{align*}
\frac{G-S}{G}&\le\exp\Big(-\frac1{2A}(\log N)^{2/3}+\frac1{8A}(\log N)^{2/3}\Big)\\
&<\exp\Big(-\frac1{4A}(\log N)^{2/3}\Big).
\end{align*}
We may assume this last expression is $<1/2$, else the theorem holds trivially, so
 $S> G/2$.  But
\[
G\ge \exp\Big(\sum_{p\le X}\frac{f(p)}p\Big)\ge\exp\Big(\frac{C_1}{\varphi(m)}(\log X)^2\Big),
\]
where $C_1$ is the constant in the lower bound implicit in Lemma \ref{lem-sum}.
Putting in our choice for $X$ we have
\[
\frac{4N}{S}\ll \frac NG\le N/\exp\Big(\frac{C_1A^2}{\varphi(m)}(\log N)^{2/3}\Big).
\]
It remains to note that $A^2/\varphi(m)\asymp1/\varphi(m)^{1/3}$, completing
our argument for Theorem \ref{th-v}.

\section{The general case}
In this section we prove Theorem \ref{th-mjbound}.

Let $a_1>a_2>\dots$ be a sequence of real numbers with $\lim a_n=0$ and let
$\mathcal A=\{a_1,a_2,\dots\}$.
For each positive integer $j$, let $V_j$ denote the subset of $\mathcal A^j$ where
the coordinates form a monotone non-increasing sequence.  Further let 
$T_j$ be the subset of $(\mathcal A\cup\{0\})^j$ again with the coordinates non-increasing.
For $v\in T_j$, let $s(v)$ denote the sum of the coordinates of $v$, and let $S_j=s(V_j)$.

 \begin{lemma}
 For $j\ge1$, the set of limit points of $V_j$ is $T_j\setminus V_j$.
 \end{lemma}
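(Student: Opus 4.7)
The plan is to prove both inclusions $(\subseteq)$ and $(\supseteq)$, using that $\mathcal A$ is discrete in $(0,\infty)$ with $0$ as its only $\R$-accumulation point. For $(\subseteq)$, I would first show that every point of $V_j$ is isolated in $V_j$: each $a_i > 0$ has a neighborhood in $\R$ meeting $\mathcal A$ only at $a_i$, so a product of $j$ such neighborhoods isolates any element of $\mathcal A^j$, and in particular of $V_j$. Hence no point of $V_j$ is a limit point of $V_j$. Now let $v^{(n)} = (a_{i_1^{(n)}}, \dots, a_{i_j^{(n)}}) \in V_j$ with $v^{(n)} \to v$. Passing to a subsequence, I may assume that for each $k$ the sequence $(i_k^{(n)})_n$ of positive integers is either eventually constant or tends to $\infty$. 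Correspondingly, the $k$-th coordinate of $v$ equals some $a_{i_k} \in \mathcal A$ or $0$, and non-increasingness of the coordinates passes to the limit, so $v \in T_j$. Combined with isolation, $v \in T_j \setminus V_j$.

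For $(\supseteq)$, let $v = (b_1, \dots, b_j) \in T_j \setminus V_j$. Since $v \notin V_j$, at least one coordinate is $0$, so there is $\ell \in \{0, 1, \dots, j-1\}$ with $b_1 \ge \dots \ge b_\ell > 0 = b_{\ell+1} = \dots = b_j$; write $b_k = a_{i_k}$ for $1 \le k \le \ell$ and set $i_0 := 0$. For each $n \ge 1$ I would define
\[
v^{(n)} := (a_{i_1}, \dots, a_{i_\ell}, a_{i_\ell + n + 1}, a_{i_\ell + n + 2}, \dots, a_{i_\ell + n + j - \ell}).
\]
Since $\mathcal A$ is strictly decreasing to $0$, the trailing entries lie below $a_{i_\ell}$ (trivially when $\ell = 0$, by index comparison otherwise), so $v^{(n)} \in V_j$; as $n \to \infty$ the trailing entries tend to $0$ while the leading entries remain fixed at $b_1, \dots, b_\ell$, so $v^{(n)} \to v$. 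The $v^{(n)}$ are pairwise distinct, so $v$ is a limit point of $V_j$.

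The only subtle ingredient is the subsequence extraction in the forward inclusion, which relies on the standard fact that any sequence of positive integers admits a subsequence that is either eventually constant or diverges to $\infty$; once this is in hand, the rest of the argument is routine bookkeeping about coordinate projections and the order structure of $\mathcal A$, and I do not foresee a genuine obstacle.
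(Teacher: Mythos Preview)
Your proof is correct and follows essentially the same approach as the paper. The only minor organizational differences are that the paper argues the forward inclusion coordinate-by-coordinate from the last entry (showing directly that the last coordinate of any limit must be $0$, hence the limit lies outside $V_j$), whereas you separate this into ``$V_j$ is discrete'' plus ``limits lie in $T_j$''; and for the reverse inclusion the paper fills the trailing zeros with a single repeated value $a_{m+n}$ rather than a decreasing block, but both constructions work equally well.
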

 \begin{proof}
 Suppose $(v_n)$ is an infinite sequence of distinct members of $V_j$ with $\lim v_n=w$.
 Let $v_n=(a_{n,1},\dots,a_{n,j})$.  The sequence $(a_{n,j})_n$ is
 either eventually constant or has limit 0.  The first option cannot occur since otherwise
 there are only finitely many choices for the vectors $v_n$.  Next, we consider $(a_{n,j-1})_n$
 and here both options are possible.  But if it is eventually constant, then all earlier
 coordinates of the vectors $v(s_n)$ likewise become eventually constant.  Continuing
 in this manner, we have that $v_n$ converges to a vector $w\in T_j$ with last coordinate 0,
 i.e., $w\in T_j\setminus V_j$.
  
 Conversely, if $t\in T_j$ with last coordinate 0, let $t=(t_1,\dots,t_k,0,\dots,0)$, where
 $t_1,\dots,t_k\in\mathcal A$ and $k<j$.  Suppose that $t_k=a_m$.   Replacing
 each of the 0's with $a_{m+n}$, we then have a sequence of vectors $t_n\in V_j$ 
 that converges to $t$.  This completes the proof.
 \end{proof}
 
\begin{lemma}
For each positive integer $j$ and each positive real $x$ there is a positive number $\epsilon$,
depending on the choice of $j$, $x$ and sequence $(a_n)$,
such that the interval $(x-\epsilon,x)$ contains no member of $S_j$.
\end{lemma}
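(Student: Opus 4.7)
The plan is to argue by contradiction, leveraging the preceding lemma together with the discrete structure of $\mathcal A$. Suppose no such $\epsilon$ exists; then for each integer $n\ge1$ we can find $v_n\in V_j$ with $x-1/n<s(v_n)<x$. I would first reduce to the case of pairwise distinct $v_n$: otherwise only finitely many vectors appear infinitely often among $(v_n)$, so $s(v_n)$ attains only finitely many values, each strictly below $x$, contradicting $s(v_n)\to x$.

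Since every coordinate of each $v_n$ lies in $(0,x)$, the sequence $(v_n)$ is bounded in $\R^j$. By Bolzano--Weierstrass, after passing to a subsequence I may assume $v_n\to w$ coordinatewise, with $w\in T_j$ and $s(w)=x$. By the previous lemma, the limit of a sequence of distinct members of $V_j$ lies in $T_j\setminus V_j$, so the last coordinate of $w$ equals $0$. Writing
\[
w=(t_1,\dots,t_k,0,\dots,0),\qquad t_1\ge\cdots\ge t_k \text{ in }\mathcal A,\ k<j,
\]
the constraint $s(w)=x>0$ forces $k\ge1$ and $t_1+\cdots+t_k=x$.

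The crucial final step exploits that $(a_n)$ is strictly decreasing with $a_n\to 0$, so each $t_i\in\mathcal A$ is an isolated point of $\mathcal A\cup\{0\}$. Consequently the coordinatewise convergence $a_{n,i}\to t_i$ for $i\le k$ forces $a_{n,i}=t_i$ for all sufficiently large $n$. Therefore, for all large $n$,
\[
s(v_n)=(t_1+\cdots+t_k)+(a_{n,k+1}+\cdots+a_{n,j})=x+\sum_{l=k+1}^{j}a_{n,l}>x,
\]
contradicting $s(v_n)<x$. This yields the desired $\epsilon$.

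The only mildly delicate point is the final one: recognizing that the discreteness of $\mathcal A$ upgrades coordinatewise convergence of the first $k$ entries of $v_n$ into eventual equality. This is what promotes the soft statement $s(v_n)\to x$ into the strict bound $s(v_n)>x$ eventually, which is exactly the contradiction needed. Without discreteness one could only conclude that $x$ is approached by $S_j$ from above, not that $x$ is isolated from below.
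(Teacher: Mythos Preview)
Your proof is correct and rests on the same mechanism as the paper's: pass to a subsequence on which each coordinate is eventually constant or tends to $0$, and observe that the positive ``tail'' coordinates then force $s(v_n)$ to exceed its limit, contradicting approach from below. The paper packages this as the slightly stronger assertion that $S_j$ contains no infinite strictly increasing sequence and argues it directly without citing the preceding lemma or Bolzano--Weierstrass, but the underlying idea is identical.
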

\begin{proof}
For each fixed $j$ there is no infinite strictly increasing 
sequence made up of members of $S_j$.  To see this, we suppose such a sequence $(s_n)$
exists and let $s_n=s(v_n)$.   Write $v_n=(a_{n,1},\dots,a_{n,j})$.  Each of the sequences
$(a_{n,i})_n$ has 0 as a limit point or it repeats some nonzero number infinitely often, so
by passing to an infinite subsequence we may assume that either the sequence of $i$th
coordinates is constant or has limit 0, and this holds for each $i$.  These possibilities
are incompatible with $s(v_n)$ strictly increasing,
 which proves that no
infinite strictly increasing sequence can be formed from the elements of $S_j$.  Thus,
the assertion in the lemma holds.
\end{proof}

We now specify that the numbers $a_i$ are unit fractions.
To prove Theorem \ref{th-mjbound}, we use the lemmas with $a_i=1/i$, and note that there is some $\epsilon>0$,
depending on the choice of $k,j$, such that
$(1/k-\epsilon,1/k)$ contains no member of $S_j$.  However, for $m$ sufficiently large,
\[
m/(km+1)=1/(k+1/m)
\]
is in this interval, so it must be that that $m/(km+1)\notin S_j$.
This completes the proof of Theorem \ref{th-mjbound}.

\section{Empirical data}

The original Erd\H os--Straus conjecture was verified up to $10^{17}$ by 
Salez \cite{S}, and this was recently improved to $10^{18} $ by Mihnea--Dumitru \cite{MD}.
By sifting with the seven congruences in \cite[Proposition 1.9]{ET}, 
with 4 replaced by $m$,
we have verified the $m=5$ case up to $10^{18}$, the $m=6, 7, 8$ cases up to $10^{13}$,
and the $m=9,\dots,15$ cases up to $10^{12}$, with the noted exceptions found.
This sifting was done only with primes, and then composites made up of exceptional primes
were checked directly.
In all the cases any other exceptional $n$, if they exist, must exceed the stated $N$.
See Table 1.

In addition, with the help of a computer we verified that the claim in Theorem \ref{th-explicit} also holds for all $m \in [16,30000]$, except $m=19$.  Note too that from Table 1 we see that it holds for
$m=10$ and $m\in[12,15]$.  We conjecture that for every $m\ge20$ there is a prime
$p\in(m^2,2m^2)$ for which $m/p$ is not the sum of 3 unit fractions.

\begin{table}[h]\label{table1}
 \begin{tabular}{ | c | l | c | c | c |}
    \hline
$m $ & all exceptions $n\le N$ & Count & $N$ \\ \hline
$4$ & $1$ & 1 & $10^{18}$ \cite{MD} \\ \hline
$5$ & $1$ & 1 & $10^{18}$ \\ \hline
$6$ & $1$ & 1 & $10^{13}$ \\ \hline
$7$ & $1, 2$ & 2 & $10^{13}$ \\ \hline
$8$ & $1, 2, 3, 11, 17, 131, 241$ &  7  & $10^{13}$ \\ \hline
$9$ & $1, 2, 5, 11, 19$ &  5 & $10^{12}$ \\ \hline
$10$ & $1, 2, 3, 7, 11, 43, 61, 67, 181$ & 9 & $10^{12}$ \\ \hline
$11$ & $1, 2, 3, 4, 37$ &  5 & $10^{12}$ \\ \hline
 & $1, 2, 3, 5, 7, 13, 25, 29, 31, 37, 73, 97, 193, 433,$ & &\\ 
$12$& $ 577,1129,1657, 1873, 2521, 2593, 3433, 10369,$ & 24 & $10^{12}$ \\ 
& $12049, 12241$ & &\\ \hline
$13$ & $1, 2, 3, 4, 5, 7, 14, 53, 61, 67, 79, 211, 281$ &  13 & $ 10^{12}$ \\ \hline
$14$ & $1, 2, 3, 4, 5, 17, 19, 29, 59, 257, 353, 841$ &  12 & $10^{12}$\\ \hline
 & $1, 2, 3, 4, 8, 16, 17, 19, 23, 31, 34, 47, 53,61,79,$ &  &  \\ 
$15$& $113,122, 137,151,197,226,233,271,541,1103,$ & 32 & $10^{12}$ \\
& $1171,1367,4201,6301, 12601,16831,20521$ & &\\ \hline
     \end{tabular}
\medskip
\caption{Values of $n\le N$ for which \eqref{ESSp} has no solution. }
\end{table}

\section{Numerically explicit estimates: primes with Type I representations}
\label{secI}

Let $\tau(n)$ denote the number of divisors of $n$ and for $j\mid 6$,  let $\tau'_j(n)$ denote
the number of  divisors $d$ of $n$ with $\gcd(d,6)=j$.
In the proof, we will make use of the following estimates.
\begin{lemma}\label{lem1}
For all integers $n\ge2$ we have
\begin{align*}
\tau(n) &\le 138.32(n-1)^{1/6},\\
\tau'_1(n)& \le 16.2(n-1)^{1/6},\\
\tau'_2(n)&\le 51.3(n-1)^{1/6},\\
\tau'_3(n)&\le 32.3(n-1)^{1/6},\\
\tau'_6(n)&\le102.7(n-1)^{1/6}.
\end{align*}
\end{lemma}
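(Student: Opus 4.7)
The plan is to exploit multiplicativity. Writing $n=2^a3^bm$ with $\gcd(m,6)=1$, one finds
\[
\tau(n)=(a+1)(b+1)\tau(m),\ \tau'_1(n)=\tau(m),\ \tau'_2(n)=a\tau(m),\ \tau'_3(n)=b\tau(m),\ \tau'_6(n)=ab\tau(m),
\]
so each of the five quantities factors neatly into a $2$-part, a $3$-part, and a coprime-to-$6$ part.

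The key ingredient is that for every prime $p$ and every integer $e\ge 0$ one has $(e+1)/p^{e/6}\le c_p$, where $c_p=1$ whenever $p\ge 64$ (since then $2/p^{1/6}\le 1$), and for $p<64$ the quantity $c_p:=\max_{e\ge 0}(e+1)/p^{e/6}$ is attained at the integer nearest $6/\log p-1$ and is explicitly computable. Similarly, $A_p:=\max_{e\ge 1}e/p^{e/6}$ is attained at the integer nearest $6/\log p$; in particular $c_2=9/2^{4/3}$, $c_3=5/3^{2/3}$, $A_2=9/2^{3/2}$, and $A_3=5/3^{5/6}$. Multiplying the coprime-to-$6$ factors prime by prime gives
\[
\tau(m)/m^{1/6}\le C_1:=\prod_{5\le p\le 61}c_p,
\]
and a direct numerical evaluation shows $C_1$ is just under the target $16.2$.

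Since $n^{1/6}\ge 2^{a/6}3^{b/6}m^{1/6}$, combining the factorwise bounds yields $\tau(n)\le c_2c_3C_1\, n^{1/6}$, $\tau'_1(n)\le C_1\, n^{1/6}$, $\tau'_2(n)\le A_2C_1\, n^{1/6}$, $\tau'_3(n)\le A_3C_1\, n^{1/6}$, and $\tau'_6(n)\le A_2A_3C_1\, n^{1/6}$. The claimed constants then pop out after multiplying the explicit numerical values.

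Finally, I would convert $n^{1/6}$ into $(n-1)^{1/6}$: for $n$ beyond a modest threshold, $(n/(n-1))^{1/6}$ is so close to $1$ that the bounds still deliver the announced constants, while for the finitely many small $n$ one checks the five inequalities by direct computation. The argument is computational rather than conceptual, and the main obstacle is doing the bookkeeping carefully enough that both the small-$n$ verification and the $(n-1)/n$ correction fit inside the claimed constants $138.32$, $16.2$, $51.3$, $32.3$, $102.7$.
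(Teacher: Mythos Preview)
Your approach is correct and is essentially the same as the paper's: both exploit multiplicativity, bound $(e+1)/p^{e/6}$ prime by prime (noting the maximum is~$1$ once $p>61$), take the product over the relevant primes, and then absorb the $n^{1/6}\to(n-1)^{1/6}$ correction by a direct check for small~$n$. Your explicit separation of the $2$-part and $3$-part via the constants $A_2=9/2^{3/2}$ and $A_3=5/3^{5/6}$ is precisely the natural way to carry out what the paper abbreviates as ``proved in a similar manner'' for the $\tau'_j$ inequalities.
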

\begin{proof}
First notice that $\tau(p^a)/p^{a/6}=(a+1)/p^{a/6}\le 1$ unless $p\le61$.  Further,
for  $p\le 61$ we compute the integer $a_p$ that maximizes $(a+1)/p^{a/6}$; these
maximizing prime powers being 
\[
2^8,3^4,5^3,7^2,11^2,13,17,19,23,29,31,37,41,43,47,53,59,61.
\]
Thus, if $u$ is the product of these prime powers, then 
\[
\tau(n)/n^{1/6}\le\tau(u)/u^{1/6}<138.313.
\]
To see the claimed inequality, note that $138.313n^{1/6}<138.32(n-1)^{1/6}$ for $n\ge3294$
and the assertion is easily checked for smaller values of $n$.  The inequalities for $\tau'_j$
are proved in a similar manner.
\end{proof}

We are aware that the exponent ``$1/6$" here can be replaced with any fixed $\epsilon>0$
at the expense of larger coefficients,
and there are even effective asymptotic estimates for the maximal order
(see \cite{NR}), but the elementary
Lemma \ref{lem1} is optimal for our needs.

\begin{lemma}
\label{divisors}
Let $j\,|\, 6$ and let $n$ be a positive integer.  
Among the divisors $d$ of $n$ with $\gcd(d,6)=j$ at most $\frac12\tau'_j(n)$
of them have $d>\sqrt{jn}$.
\end{lemma}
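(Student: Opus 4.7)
The plan is to produce a divisor-involution $d\mapsto d^*$ on
\[
D_j(n):=\{d\mid n : \gcd(d,6)=j\}
\]
whose orbits satisfy $d\cdot d^*\le jn$. Then in each orbit at most one element can exceed $\sqrt{jn}$, so the count of such large divisors is at most $\tau'_j(n)/2$.

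To construct the involution, write $n=2^a 3^b m$ with $\gcd(m,6)=1$ and $j=2^{j_2}3^{j_3}$ with $j_2,j_3\in\{0,1\}$. If $D_j(n)$ is empty (i.e.\ $j_2=1$ but $a=0$, or $j_3=1$ but $b=0$) the lemma is trivial, so assume $a\ge j_2$ and $b\ge j_3$. Every $d\in D_j(n)$ has a unique factorization $d=2^\alpha 3^\beta d_1$ with $d_1\mid m$, where $\alpha=0$ if $j_2=0$ and $\alpha\in[1,a]$ if $j_2=1$, and analogously for $\beta$. Set
\[
N:=2^{j_2(a+1)}\,3^{j_3(b+1)}\,m,\qquad d^*:=N/d.
\]
A short case check on the $2$-adic and $3$-adic exponents confirms $d^*\in D_j(n)$: when $j_2=1$ the exponent of $2$ in $d^*$ is $a+1-\alpha\in[1,a]$, matching the requirement $\gcd(d^*,2)=2$, and when $j_2=0$ both $d$ and $d^*$ are odd; the $3$-adic story is identical. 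Hence $d\mapsto d^*$ is a well-defined involution on $D_j(n)$.

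The key inequality follows from a one-line computation:
\[
\frac{N}{jn}=2^{a(j_2-1)}\,3^{b(j_3-1)}\le 1,
\]
since both exponents are nonpositive. Thus $d\cdot d^*=N\le jn$ in every orbit, which forces at most one of $d,d^*$ to exceed $\sqrt{jn}$ (and a fixed point $d=d^*$ satisfies $d=\sqrt{N}\le\sqrt{jn}$, so it does not contribute). Summing over orbits yields the bound $\tfrac12\tau'_j(n)$. The only non-routine step is verifying that $d^*=N/d$ stays in $D_j(n)$—that is, that the $2$- and $3$-adic exponents of $N/d$ land in the ranges demanded by $\gcd(d^*,6)=j$—but this is immediate from the chosen formula for $N$.
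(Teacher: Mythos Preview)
Your proof is correct. Both you and the paper use a divisor-pairing argument, so the approaches are essentially the same in spirit. The paper proceeds case by case: for each $j\mid 6$ it strips off the relevant $2$- and $3$-parts to reduce to the classical fact that at most half the divisors of an auxiliary integer $M$ exceed $\sqrt{M}$, and then checks that $\sqrt{jn}\ge\sqrt{M}$. Your version is a bit cleaner: the single formula $N=2^{j_2(a+1)}3^{j_3(b+1)}m$ gives a uniform involution on $D_j(n)$ for all four values of $j$ at once, and the inequality $N\le jn$ falls out in one line. Nothing is lost or gained mathematically; your packaging just avoids the case split.
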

\begin{proof}
Write $n=2^i3^kv$ with $\gcd(v,6)=1$ and $i,k\ge0$.
In the case $j=1$ the divisors of $n$ coprime to 6
are precisely the divisors of $v$, and among these at most half of them are $>\sqrt{v}$.
But $\sqrt{n}\ge\sqrt{v}$, so the case $j=1$ is proved.  If $j=2$ note that the divisors $2d$ of $n$
coprime to 3, correspond to divisors $d$ of $n/2$ coprime to 3 with at most half of these
$>\sqrt{2^{i-1}v}$.  Note that $2d>\sqrt{2n}$ implies that $d>\sqrt{n/2}\ge\sqrt{2^{i-1}v}$.
The other cases are proved similarly.
\end{proof}

\begin{lemma}
\label{lem3}
For positive integers $r\le k$, and for $x\ge r$, we have
\begin{align*}
\sum_{\substack{n\le x\\n\,\equiv\,r\kern-5pt\pmod k}}n^{-\alpha}
&\le r^{-\alpha}+\frac1k(1-\alpha)^{-1}(x^{1-\alpha}-r^{1-\alpha}), \qquad (0\le\alpha<1),\\
\sum_{\substack{n\le x\\n\,\equiv\,r\kern-5pt\pmod k}}n^\alpha
&<\frac1{k(1+\alpha)}(x+k)^{1+\alpha}, \qquad(\alpha>0).
\end{align*}

\end{lemma}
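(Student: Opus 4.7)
The plan is to parameterize the summation set and compare each term to a nearby integral, exploiting the monotonicity of $t\mapsto t^{\pm\alpha}$.

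First I would write the terms of the sum as $n = r + jk$ for $j = 0, 1, 2, \dots, J$, where $J := \lfloor (x-r)/k \rfloor \ge 0$ (which uses $x \ge r$). Thus both sums are indexed by $j$ from $0$ to $J$ with $n = r+jk$.

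For the first inequality, note that when $0 \le \alpha < 1$ the function $t \mapsto t^{-\alpha}$ is non-increasing on $(0,\infty)$, so for each $j \ge 1$,
\[
(r+jk)^{-\alpha} \le \frac{1}{k} \int_{r+(j-1)k}^{r+jk} t^{-\alpha}\,dt.
\]
Summing over $j = 1, 2, \dots, J$ telescopes the integral to $\int_r^{r+Jk} t^{-\alpha}\,dt \le \int_r^x t^{-\alpha}\,dt$, since $r+Jk \le x$. Adding back the $j=0$ term $r^{-\alpha}$ and evaluating the integral gives
\[
\sum_{\substack{n\le x\\n\equiv r\pmod k}} n^{-\alpha}
\le r^{-\alpha} + \frac{1}{k}\cdot\frac{x^{1-\alpha}-r^{1-\alpha}}{1-\alpha},
\]
as desired. (The $\alpha=0$ case amounts to the trivial count $J+1 \le 1 + (x-r)/k$.)

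For the second inequality, now $t\mapsto t^\alpha$ is increasing for $\alpha > 0$, so for each $j = 0, 1, \dots, J$,
\[
(r+jk)^\alpha \le \frac{1}{k}\int_{r+jk}^{r+(j+1)k} t^\alpha\,dt.
\]
Summing over $j$ gives $\sum (r+jk)^\alpha \le \frac{1}{k}\int_r^{r+(J+1)k} t^\alpha\,dt$. Since $J \le (x-r)/k$, we have $r+(J+1)k \le x+k$, hence the right side is at most $\frac{1}{k(1+\alpha)}\bigl((x+k)^{1+\alpha} - r^{1+\alpha}\bigr)$, which is strictly less than $\frac{1}{k(1+\alpha)}(x+k)^{1+\alpha}$ since $r \ge 1$.

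There is no real obstacle here; the proof is a routine integral comparison. The only places one has to be mildly careful are the boundary case $\alpha = 0$ in the first bound (which must be handled separately or by continuity, since the factor $(1-\alpha)^{-1}(x^{1-\alpha}-r^{1-\alpha})$ limits to $\log(x/r)$ but the stated bound uses the secant/counting form), and in tracking that $r+(J+1)k$ can equal $x+k$ so that the strictness in the second inequality comes from discarding the $-r^{1+\alpha}$ term rather than from the integral comparison.
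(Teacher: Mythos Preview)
Your proof is correct and is precisely the routine integral comparison the paper has in mind; the paper's own proof reads in its entirety ``These inequalities are easy exercises.'' One small slip in your closing remarks: the expression $(1-\alpha)^{-1}(x^{1-\alpha}-r^{1-\alpha})$ tends to $\log(x/r)$ as $\alpha\to 1^-$, not as $\alpha\to 0$; at $\alpha=0$ it simply equals $x-r$ and your integral argument applies there without modification.
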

\begin{proof}
These  inequalities are easy exercises.
\end{proof}

As discussed above, when $p$ is prime, $p\,\nmid\, m$ and $m\ge 4$, then all solutions to \eqref{ESSp} 
are of Type I or Type II.  By Proposition \ref{proptype1par},  if $(x,y,z)$ is a solution to \eqref{ESSp} of Type I, there are natural numbers $a,d,f$ such that
$$
p\equiv -f \pmod{mad}, \qquad f\mid ma^2 d+1.
$$
\begin{lemma}\label{madub}
When $p$ has a Type I solution to \eqref{ESSp} with $m\ge4$, then $mad \le 2p+1$.
\end{lemma}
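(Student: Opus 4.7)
The plan is to read off the bound from the parametrization already established in Proposition \ref{proptype1par}, using the symmetry between the two $p$-coprime denominators. Starting from a Type I solution $(x, y, z)$ of \eqref{ESSp} with $n = p$, both $y$ and $z$ are coprime to $p$, so we are free to reorder them. I would swap if necessary so that $y \le z$, and then invoke the parametrization to obtain $a, b, c, d \in \N$ with $x = abdp$, $y = acd$, $z = bcd$, and satisfying the key identity \eqref{eqt14} specialized to $n = p$, namely
\[
mabcd = p(a+b) + c.
\]
Since $a = \gcd(x', y')$ and $b = \gcd(x', z')$ in the notation of the proof of Proposition \ref{proptype1par}, and $y = acd$, $z = bcd$, the reordering $y \le z$ forces $a \le b$.

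From this identity the bound is almost immediate. Dividing by $bc$ yields
\[
mad = \frac{p(a+b)}{bc} + \frac{1}{b} = \frac{p}{c} + \frac{pa}{bc} + \frac{1}{b},
\]
and each of the three terms is controlled: $p/c \le p$ since $c \ge 1$; $pa/(bc) \le p$ since $a \le b$ and $c \ge 1$; and $1/b \le 1$ since $b \ge 1$. Summing gives $mad \le 2p + 1$, as asserted.

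There is no serious obstacle here; the proof is essentially bookkeeping on top of Proposition \ref{proptype1par}. The only point requiring care is the relabeling step: I need that swapping $y$ and $z$ preserves the Type I structure, which is automatic since both denominators are coprime to $p$ in any Type I solution, and that the swap simply exchanges $a$ and $b$ in the parametrization. It is worth noting that the bound is sharp, since taking $c = 1$ and $a = b = 1$ in the identity forces $mad = 2p + 1$.
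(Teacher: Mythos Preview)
Your proof is correct and reaches the same conclusion as the paper, but by a noticeably cleaner route. The paper argues through the auxiliary parameters $e$ and $f$: it first derives $bf = pa + c$, bounds $f$ using $ef = ma^2d + 1 \ge m+1$, and then splits into the cases $c \ge 2$ (giving $mad \le p\,m/(m-1) < 2p$) and $c = 1$ (giving $mad = p + f \le 2p+1$). You instead stay with the single identity $mabcd = p(a+b) + c$ from \eqref{eqt14}, divide by $bc$, and bound the three resulting terms using only $a \le b$ and $b,c \ge 1$. This avoids both the auxiliary variables and the case split entirely. The paper's detour does yield the marginally sharper bound $mad < 2p$ when $c \ge 2$, but that refinement is never used downstream, so your argument loses nothing for the applications.
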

\begin{proof}
As in the proof of Proposition \ref{proptype1par}, we define the natural numbers
$e:=\frac{ma^2d+1}{f}$, $c:=\frac{p+f}{mad}$, $b:=ce-a$,
where
$
(x,y,z)=(abdp,acd,bcd).
$
We may assume $y\le z$, i.e. $a\le b$. Then $ce=a+b\le 2b$ and 
$c \le \frac{2b}{e} = \frac{2}{ef} bf$. 
The definitions of $e,c,b$ imply that $bf=pa+c$, so
$$ bf =pa+c \le pa +  \frac{2}{ef} bf \le pa + \frac{2}{m+1} bf,$$
since $ef=ma^2d+1\ge m+1$. Thus,
$$
bf \le pa \frac{m+1}{m-1}
$$
and, since $b\ge a$, 
$$
f\le p \frac{m+1}{m-1}.
$$
Now
$$
macd = p+f \le p\frac{2m}{m-1}.
$$
If $c\ge 2$, we obtain $mad \le p \frac{m}{m-1} < 2p$. If $c=1$, then $bf=pa+c=pa+1$ and $f=pa/b+1/b \le p+1$, so 
$mad=p+f\le 2p+1$. 
\end{proof}

For given $m,a,d,f$, we wish to count  the number of primes $p\in(N/2, N]$ satisfying $p\equiv - f \pmod{mad}$.  We shall consider 4 cases depending on the value of $\gcd(f,6)$.

\subsection{The case $\gcd(f,6)=1$.}  In this case, for given values of $m,a,d$, we
let $f$ run over the divisors of $ma^2d+1$ coprime to 6.  The number of primes
$p\in(N/2,N]$ with $p\equiv-f\pmod{mad}$ is at most
$$ 
\le \left\lceil \frac{N}{2mad}\right\rceil\le\left\lfloor\frac{N}{2mad}\right\rfloor+1.
$$
Thus, by
 Lemma \ref{lem1}, with $\kappa_1=16.2$, the number of primes in this case is at most
 \begin{align*}
 \sum_{mad\le 2N+1}&\left(\left\lfloor\frac N{2mad}\right\rfloor+1\right)\tau'_1(ma^2d+1)\\
& \le \frac{\kappa_1N}{2m^{5/6}}\sum_{mad\le N/2}a^{-2/3}d^{-5/6}+\sum_{mad\le2N+1}\tau'_1(ma^2d+1)\\
 &=S_{1,1}+S_{1,2},
 \end{align*}
 say,  since $\lfloor N/2mad \rfloor $ vanishes unless $mad\le N/2$.
Thus, by Lemma \ref{lem3},
\[
S_{1,1}\le\frac{\kappa_1 N}{2m^{5/6}}\sum_{d\le N/2m}3\Big(\frac N{2md}\Big)^{1/3}d^{-5/6}
<\frac{3\kappa_1 N^{4/3}}{2^{4/3}m^{7/6}}\zeta(7/6)<127.1\frac{N^{4/3}}{m^{7/6}}.
\]

We will work harder for $S_{1,2}$.  We first consider 2 cases: $mad>1.01N$ and $mad\le1.01N$.
In the first case, since $madc-f\le N$, we have $f>N/100$.  But $mad\le2N+1$, so we have
$f>mad/300$, and so 
\begin{equation}
\label{eq:large}
f^2>\frac{m^2a^2d^2}{10^5}>10ma^2d>6(ma^2d+1),
\end{equation}
assuming $m\ge10^9$, say.  So, we are only considering divisors of $ma^2d+1$ larger than
the square root.  Thus, by Lemma \ref{divisors},
\begin{equation}
\label{eq:twosums}
S_{1,2}\le\frac12\sum_{mad\le 1.01N}\tau'_1(ma^2d+1)+\frac12\sum_{mad\le 2N+1}\tau'_1(ma^2d+1).
\end{equation}
The two sums are computed similarly; let $X$ stand for either $1.01N$ or $2N+1$.

Considering first the case when $a\le100$, we have
\begin{align*}
\frac12\sum_{a\le100}\sum_{d\le X/ma}\tau'_1(ma^2d+1)&\le
\frac12\kappa_1 m^{1/6}\sum_{a\le100}a^{1/3}\sum_{d\le X/ma}d^{1/6}\\
&\le \frac37 \kappa_1 m^{1/6}\sum_{a\le100} a^{1/3}\Big(\frac{X}{ma}+1\Big)^{7/6}\\
&<\frac37\kappa_1 m^{1/6}\sum_{a\le100}a^{1/3}\Big(\frac{1.01X}{ma}\Big)^{7/6}\\
&<\frac37\kappa_1 (1.01X)^{7/6}m^{-1}7.51.
\end{align*}
Here we directly computed the $a$-sum and assumed that $X>m^2$ and that $m\ge10^9$.  To this we will add the case $a>100$:
\begin{align*}
\frac12\sum_{a>100}\sum_{mad\le X}\tau'_1(ma^2d+1)&\le
\frac12\kappa_1 m^{1/6}\sum_{d<X/100m}d^{1/6}\sum_{100<a\le X/md}a^{1/3}\\
&\le\frac38\kappa_1 m^{1/6}\sum_{d<X/100m}d^{1/6}\Big(\frac{X}{md}+1\Big)^{4/3}\\
&<\frac38\kappa_1 (1.01X)^{4/3}m^{-7/6}\zeta(7/6).
\end{align*}
Adding these 2 estimates, each for $X=1.01N$ and $X=2N+1$, we get
\[
S_{1,2}<143.4\frac{N^{4/3}}{m^{7/6}}+171.8\frac{N^{7/6}}m.
\]
So, our estimate in the case that $f$ is coprime to 6 is
\begin{align*}
S_{1,1}+S_{1,2}&<270.5\frac{N^{4/3}}{m^{7/6}}+171.8\frac{N^{7/6}}m\\
&=\Big(270.5+\frac{171.8}{(N/m)^{1/6}}\Big)\frac{N^{4/3}}{m^{7/6}}
<276\frac{N^{4/3}}{m^{7/6}},
\end{align*}
assuming that $N>m^2$ and $m\ge10^9$.

\subsection{The case $\gcd(f,6)=2$.}
If $6\mid m$, then we must have $f$ coprime to 6 (else $madc-f$ is not prime),
so this last estimate stands for our bound for Type I solutions.
Otherwise we have more work to do.  In the current case $f$ is even, so
that we only consider values of $m,a,d,c$ that are all odd.  For given values of $m,a,d$,
the number of odd integers $c$ that place $madc$ in a half-open interval of length $N/2$
is at most $\lceil N/4mad\rceil$.  Thus, the count for Type I primes in this case
is at most
\begin{align*}
\sum_{\substack{mad\le 2N+1\\ad~{\rm odd}}}&\left\lceil\frac{N}{4mad}\right\rceil\tau'_2(ma^2d+1)\\
\le&\kappa_2\sum_{\substack{mad\le N/4\\ad~{\rm odd}}}\frac N{4mad}m^{1/6}a^{1/3}d^{1/6}+\kappa_2\sum_{\substack{mad\le 2N+1\\ad~{\rm odd}}}m^{1/6}a^{1/3}d^{1/6}\\
=&S_{2,1}+S_{2,2},
\end{align*}
say, where $\kappa_2=51.3$ from Lemma \ref{lem1}.
We follow the same arguments we made for $S_{1,1},S_{1,2}$, now taking into account that
$a,d$ are odd numbers.  Using Lemma \ref{lem3} with $k=2$, $r=1$, and $\alpha=2/3$,
\[
\sum_{\substack{n\le x\\n~{\rm odd}}}n^{-2/3}<\frac32x^{1/3},
\]
so that
\[
S_{2,1}\le \frac{3\kappa_2N^{4/3}}{2^{11/3}m^{7/6}}(1-2^{-7/6})\zeta(7/6)
<44.3\frac{N^{4/3}}{m^{7/6}}.
\]

For $S_{2,2}$, the analogue of \eqref{eq:twosums} has the two sums with $\tau'_2$ and
with $a,d$ restricted to odd numbers.  Following the
argument with $X$ standing for either $1.01N$ or $2N+1$ and using Lemmas \ref{divisors},
\ref{lem3}, and \eqref{eq:large}, we have
\begin{align*}
\frac12\sum_{\substack{a\le100\\a~{\rm odd}}}\sum_{\substack{d\le X/ma\\d~{\rm odd}}}
&\tau'_{2}(ma^2d+1)\le\frac3{14}\kappa_2m^{1/6}\sum_{\substack{a\le100\\a~{\rm odd}}}
a^{1/3}\Big(\frac{X}{ma}+2\Big)^{7/6}\\
&\le \frac3{14}\kappa_2m^{-1}(1.01X)^{7/6}\sum_{\substack{a\le100\\a~{\rm odd}}}a^{-5/6}
<45.36X^{7/6}m^{-1},
\end{align*}
where we directly computed the $a$-sum and we assumed that $N>m^2$, $m\ge10^9$.
We also have
\begin{align*}
\frac12\sum_{\substack{ad\le X/m\\a,d~{\rm odd}\\a>100}}&\tau'_{2}(ma^2d+1)
\le\frac3{16}\kappa_2m^{1/6}\sum_{\substack{d\le X/100m\\d~{\rm odd}}}d^{1/6}\Big(\frac X{md}+2\Big)^{4/3}\\
&\le\frac3{16}\kappa_2(1-2^{-7/6})\zeta(7/6)(1.02X)^{4/3}/m^{7/6}<36.1X^{4/3}m^{-7/6}.
\end{align*}
Adding these two estimates with the two values of $X$, we have
\[
S_{2,2}<147.8N^{7/6}m^{-1}+127.6N^{4/3}m^{-7/6}.
\]
Thus, assuming $N>m^2$ and $m\ge10^9$,
\[
S_{2,1}+S_{2,2}<\Big(171.9+\frac{147.8}{(N/m)^{1/6}}\Big)N^{4/3}m^{-7/6}<177N^{4/3}m^{-7/6}.
\]

\subsection{The case $\gcd(f,6)=3$.}
In this case we have $madc$ not divisible by 3, and $ma^2d\equiv-1\pmod 3$.  Thus,
we have $d\equiv-m\pmod3$.  The number of integers $c\not\equiv0\pmod 3$ such
that $madc$ is in an interval of length $N/2$ is $\le2\lceil N/6mad\rceil$.  However,
at the top end, namely if $mad>N/2$, then there is at most 1 value of $c$ in play,
and for $mad>1.01N$, we have at most half of $\tau'_3(ma^2d+1)$ as the number of possibilities
for $f$.   Using $2\lceil N/6mad\rceil <2\lfloor N/6mad\rfloor+2$, the number of primes in this case is
at most
\[
S_{3,1}+S_{3,2},
\]
where
\[
S_{3,1}:=\sum_{\substack{mad\le N/6\\d\,\equiv\,-m\kern-5pt\pmod3\\3\,\nmid\, a}}\frac{N}{3mad}\tau'_3(ma^2d+1)
\]
and
 \begin{equation}
 \label{eq:fsum}
S_{3,2}:=f(N/2)+\frac12f(1.01N)+\frac12f(2N+1),
\end{equation}
where 
\begin{equation}
\label{eq:f(x)}
f(x):=\sum_{\substack{mad\le x\\d\,\equiv\,-m\kern-5pt\pmod3\\3\,\nmid\, a}}\tau'_3(ma^2d+1).
\end{equation}

 From Lemma \ref{lem1} with $\kappa_3=32.3$ we have
 \begin{align*}
 S_{3,1}&\le\frac{\kappa_3 N}{3m^{5/6}}\sum_{\substack{ad\le N/6m\\d\,\equiv\,-m\kern-5pt\pmod3\\3\,\nmid\, a}}
 a^{-2/3}d^{-5/6}\\
& <\frac{\kappa_3 N}{3m^{5/6}}\sum_{\substack{d\le N/6m\\d\,\equiv\,-m\kern-5pt\pmod3}}
 d^{-5/6}\cdot2\Big(\frac{N}{6md}\Big)^{1/3},
 \end{align*}
 using Lemma \ref{lem3} with $k=3$ and $r=1,2$.  Summing numerically, we have
 \[
 \sum_{d\,\equiv\,-m\kern-5pt\pmod3}d^{-7/6}
 \le \sum_{d\,\equiv\,1\kern-5pt\pmod 3}d^{-7/6}<2.701,
 \]
so that 
\[
S_{3,1}<32.01N^{4/3}m^{-7/6}.
\]

For $S_{3,2}$ we follow the argument for $S_{2,2}$, getting
\[
\sum_{\substack{a\le100\\3\,\nmid\,a}}\kern-2pt\sum_{\substack{d\le x/ma\\d\,\equiv\,-m\kern-5pt\pmod3}}
\tau'_3(ma^2d+1)\le\frac27\kappa_3\frac{(1.01x)^{7/6}}{m}\sum_{\substack{a\le100\\3\,\nmid\,a}}\frac1{a^{5/6}}<50.1\frac{x^{7/6}}m.
\]
Further,
\[
\sum_{\substack{ad\le x/m\\3\,\nmid\,a\\a>100\\d\,\equiv\,-m\kern-5pt\pmod3}}\tau'_3(ma^2d+1)
\le\frac{\kappa_3}{2}\frac{(1.03x)^{4/3}}{m^{7/6}}
\sum_{\substack{d\,\equiv\,1\kern-5pt\pmod3\\d\le x/m}}\frac1{d^{7/6}}<45.4\frac{x^{4/3}}{m^{7/6}}.
\]
Adding these 2 estimates, we have 
\[
f(x)<\Big(45.4+\frac{50.1}{(x/m)^{1/6}}\Big)\frac{x^{4/3}}{m^{7/6}}<47\frac{x^{4/3}}{m^{7/6}},
\]
where we have been assuming that $x>m^2$ and $m\ge10^9$.  Thus, by \eqref{eq:fsum}
and assuming $N\ge2m^2$, $m\ge10^9$,
\[
S_{3,1}+S_{3,2}<133.7\frac{N^{4/3}}{m^{7/6}}.
\]

\subsection{The case $(f,6)=6$.}
This case is similar to the preceding one, except that we use $\kappa_6=102.7$ in
place of $\kappa_3$, we assume the variables $a,d,c$ are odd (in addition to not
being divisible by 3), and we assume that $d\equiv-m\pmod6$.  The first observation
is that the number of integers $c$ in an interval $(x,x+L]$ coprime to 6 is $\le\lceil(L+1)/3\rceil$.
So, the number of integers $c$ coprime to 6 for which $madc-f$ is in $(N/2,N]$ is $\le\lceil N/6mad+1/3\rceil$.  At the top end, the number of choices for $c$ when $mad\ge N/4$ is $\le 1$,
and the weight of this possible choice is $1/2$ if $mad\ge1.01N$ as before.  Thus, the count is
at most
\begin{equation}
\label{eq:gcd6}
\sum_{\substack{mad\le N/4\\(a,6)=1\\d\,\equiv\,-m\kern-5pt\pmod6}}
\frac N{6mad}\tau'_6(ma^2d+1)
+\frac13g(N/4)+\frac12g(1.01N)+\frac12g(2N+1),
\end{equation}
where
\[
g(x):=
\sum_{\substack{mad\le x\\(a,6)=1\\d\,\equiv\,-m\kern-5pt\pmod6}}
\tau'_6(ma^2d+1).
\]

Using Lemma \ref{lem3},
\begin{align*}
\sum_{\substack{mad\le N/4\\(a,6)=1\\d\,\equiv\,-m\kern-5pt\pmod6}}
&\frac N{6mad}\tau'_6(ma^2d+1)
<\frac{\kappa_6 N}{6m^{5/6}}\sum_{\substack{ad\le N/4m\\(a,6)=1\\d\,\equiv\,-m\kern-5pt\pmod6}}
a^{-2/3}d^{-5/6}\\
&<\frac{\kappa_6 N}{6m^{5/6}}\sum_{\substack{d\le N/4m\\d\,\equiv\,-m\kern-5pt\pmod6}}
\Big(\frac{N}{4md}\Big)^{1/3}d^{-5/6}\\
&<\frac{\kappa_6 N^{4/3}}{6\cdot4^{1/3}m^{7/6}}\sum_{d\,\equiv\,1\kern-5pt\pmod6}d^{-7/6}
<19.23\frac{N^{4/3}}{m^{7/6}}.
\end{align*}

As with the previous cases, we first sum on $a\le100$.  We have
\begin{align*}
\sum_{\substack{a\le100\\(a,6)=1}}\sum_{\substack{d\le x/ma\\d\,\equiv\,-m\kern-5pt\pmod6}}&\tau'_6(ma^2d+1)
\le\kappa_6m^{1/6}\sum_{\substack{a\le100\\(a,6)=1}}\frac17a^{1/3}\Big(\frac{x}{ma}+6\Big)^{7/6}\\
&\le\frac{\kappa_6(1.01x)^{7/6}}{7m}\sum_{\substack{a\le100\\(a,6)=1}}a^{-5/6}
<42.63\frac{x^{7/6}}m.
\end{align*}
Also
\begin{align*}
\sum_{\substack{a>100\\(a,6)=1}}\sum_{\substack{d\le x/ma\\d\,\equiv\,-m\kern-5pt\pmod6}}\tau'_6(ma^2d+1)
&\le \frac{\kappa_6(1.06x)^{4/3}}{4m^{7/6}}\sum_{d\,\equiv\,-m\kern-5pt\pmod6}d^{-7/6}\\
&<49.47\frac{x^{4/3}}{m^{7/6}}.
\end{align*}
Thus, 
\[
g(x)<42.63\frac{x^{7/6}}{m}+49.47\frac{x^{4/3}}{m^{7/6}}.
\]
We conclude that when $m\ge 10^9$ and $N=2m^2$,
\[
\frac13g(N/4)+\frac12g(1.01N)+\frac12g(2N+1)<92.03N^{4/3}m^{-7/6}.
\]
With the previous estimate we have from \eqref{eq:gcd6} that the number of primes $p\le N$ in the case
$\gcd(f,6)=6$ is $\le111.3N^{4/3}/m^{7/6}$.  With the estimates for $S_{j,k}$ for $j=1,2,3$ and $k=1,2$, we have the following result.

\begin{prop}
\label{typeIcount}
When $N=2m^2$ and $m\ge10^9$, the 
number of primes $p\in(N/2, N]$ which have a Type I representation is less than $698N^{4/3}/m^{7/6}$.
\end{prop}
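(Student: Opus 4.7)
\smallskip

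\noindent\textbf{Proof proposal.} The plan is to observe that the proposition is simply the sum of the four case analyses carried out in Section \ref{secI}. Every Type I representation of $p$ corresponds, via Corollary \ref{corI}, to a triple $(a,d,f) \in \mathbb{N}^3$ satisfying $f \mid ma^2d+1$ and $mad \mid p+f$. Since $p \in (N/2,N]$ is prime and $N = 2m^2$ with $m \ge 10^9$, we have $p$ coprime to $6$. By Lemma \ref{madub}, we have $mad \le 2p+1 \le 2N+1$.

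The partition I would use is according to $\gcd(f,6) \in \{1,2,3,6\}$, which covers all possibilities since every positive integer has one of these four values as its $\gcd$ with $6$. For each value of $\gcd(f,6)$, the contribution to the count of primes was bounded in the respective subsection. Specifically, the analyses yield
\begin{align*}
\gcd(f,6)=1: \quad &S_{1,1}+S_{1,2} < 276 \, N^{4/3}/m^{7/6},\\
\gcd(f,6)=2: \quad &S_{2,1}+S_{2,2} < 177 \, N^{4/3}/m^{7/6},\\
\gcd(f,6)=3: \quad &S_{3,1}+S_{3,2} < 133.7 \, N^{4/3}/m^{7/6},\\
\gcd(f,6)=6: \quad &\text{(the sum in \eqref{eq:gcd6})} < 111.3 \, N^{4/3}/m^{7/6},
\end{align*}
all valid under the hypotheses $N=2m^2$ and $m \ge 10^9$.

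To finish, I would simply add these bounds:
\[
276 + 177 + 133.7 + 111.3 = 698,
\]
which yields the total $698\, N^{4/3}/m^{7/6}$ claimed. The only subtlety worth flagging is that the four subsection arguments all assumed $N \ge m^2$ (and sometimes $N \ge 2m^2$) together with $m \ge 10^9$, and one must verify that the partition by $\gcd(f,6)$ is in fact exhaustive — which it is, trivially. There is no new estimate to perform; the proof is a one-line aggregation and the main ``obstacle'' (the hard analytic work) has already been discharged in the case-by-case divisor sums and Lemma \ref{lem3} applications that precede the statement.
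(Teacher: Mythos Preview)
Your proposal is correct and matches the paper's own approach exactly: the proposition is stated immediately after the four case analyses, and its proof is precisely the one-line aggregation $276 + 177 + 133.7 + 111.3 = 698$ that you describe. There is no additional content in the paper's argument beyond what you have written.
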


\section{Numerically explicit estimates: primes with Type II representations}
\label{secII}
In this section we  prove the following statement.

\begin{prop}
\label{prop-explicit}
For every natural number $m\ge 6$, there exists a prime 
$
p>m^2,
$
for which \eqref{ESSp} has no Type II solution.
\end{prop}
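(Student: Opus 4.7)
The plan is to mirror the Type I analysis of Section~\ref{secI}: for a suitable $N$ (to be chosen), bound the number of primes $p \in (m^2, N]$ that admit a Type II representation, and compare with an effective lower bound for $\pi(N)-\pi(m^2)$. By Corollary~\ref{corII}, the existence of a Type II representation for prime $p$ is equivalent to finding $a,b,e \in \mathbb{N}$ with $e \mid a+b$ and $mab \mid p+e$; by the $y\leftrightarrow z$ symmetry and the coprimality argument in the proof of Proposition~\ref{proptype2par} we may assume $a \le b$ and $\gcd(a,b) = 1$. The constraint $(m-2)ab \le p$ derived in the proof of Theorem~\ref{th-mboundtoo} then forces $mab \le \frac{m}{m-2}\, N \le 2N$ whenever $p \le N$ and $m \ge 4$.

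For each admissible pair $(a,b)$ the valid shifts $e$ lie in at most $\tau(a+b)$ residue classes modulo $mab$, each containing at most $\lceil (N-m^2)/(mab)\rceil$ integers of $(m^2, N]$. So the number of primes $p \in (m^2, N]$ admitting a Type II representation is at most
\[
\sum_{\substack{a \le b,\ (a,b) = 1 \\ mab \le 2N}} \left\lceil \frac{N-m^2}{mab} \right\rceil \tau(a+b).
\]
I would split this sum into the ``small'' regime $mab \le N/2$, where the ceiling is well-approximated by $N/(mab)$ and the floor term dominates, and the ``large'' regime $N/2 < mab \le 2N$, where at most one prime falls in each class and, as in Section~\ref{secI}, Lemma~\ref{divisors} applies to cut $\tau(a+b)$ in half by keeping only the ``large'' divisors. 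Inserting the explicit bound $\tau(n) \le 138.32(n-1)^{1/6}$ from Lemma~\ref{lem1} together with $a+b \le 2b$, and evaluating the resulting double sums via Lemma~\ref{lem3} in the same manner that the sums $S_{j,1}$ and $S_{j,2}$ of Section~\ref{secI} are handled, should produce a bound of order $N^{7/6}/m^{7/6}$ on the Type II count.

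With $N$ chosen to be a modest power of $m$ (for instance slightly larger than $m^2$), and an effective form of the prime number theorem such as Dusart's bounds to lower-bound $\pi(N)-\pi(m^2)$, this beats the Type II count for every $m$ above some explicit threshold $m_0$. The main obstacle is the small cases: the proposition requires $m \ge 6$, but the counting bound will only win once $m$ is reasonably large (plausibly in the low thousands with careful tracking of the constants). For the residual range $6 \le m < m_0$, I would complete the proof by direct computation, exhibiting for each such $m$ an explicit prime $p > m^2$ for which the system $e \mid a+b$, $mab \mid p+e$ with $mab \le 2p$ has no solution; this is a finite search in each case, in harmony with the computational data of Section~5.
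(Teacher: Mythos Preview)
Your proposal is correct in outline but takes a genuinely different route from the paper. You mirror the Type~I machinery of Section~\ref{secI}: pointwise bound $\tau(n)\le 138.32\,n^{1/6}$ from Lemma~\ref{lem1}, split into small and large moduli, and invoke Lemma~\ref{divisors} to halve the divisor count in the large regime. This yields a Type~II count of order $(N/m)^{7/6}$. The paper instead exploits the much simpler \emph{average} divisor bounds of Lemma~\ref{lem2}: writing $n=a+b$ and using $\sum_{n\le x}\tau(n)/n\le P(x)$ and $\sum_{n\le x}\tau(n)\le x(1+\log x)$ directly gives $T_1+T_2\le \tfrac{m}{2}(3+\log m)^3+2m(3+\log m)^2$ with $N=2m^2$, i.e.\ a bound of order $m\log^3 m$, with no need for Lemma~\ref{divisors} or any $\gcd(e,6)$ casework.

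What each approach buys: the paper's bound is both simpler and sharper, giving an analytic threshold of $m\ge 34000$ (refined by intermediate computation down to $m\ge 3000$); your pointwise approach would carry a large constant (roughly $138\cdot 6\cdot\zeta(7/6)$ before the $T_2$ piece) and push the analytic threshold well into the millions, enlarging the residual computation substantially. On the other hand, your method has the virtue of being completely uniform with Section~\ref{secI}, so the whole paper would run on one engine. Either way the finite check for small $m$ is unavoidable, and your plan for it is sound.
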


\begin{remark}
When $m=4$, every prime $p < 10^{13}$ has a Type II solution.
When $m=5$,  every prime $p < 10^{13}$ except $2$ and $5$ has a Type II solution.
We conjecture that these statements also hold for $p>10^{13}$.
\end{remark}

\begin{remark}
The proof actually shows that for all $m\ge 6$, except for $m=7$, there is a prime $p\in (m^2, 2m^2]$ with no Type II solution.
When $m=7$, the first prime $p>7^2$ without a Type II solution is $127$.
\end{remark}

In the proof of Proposition \ref{prop-explicit}, we will make use of the following estimates.
\begin{lemma}\label{lem2}
We have
$$
\sum_{n\le x} \frac{1}{n} \le  1+\log x \qquad (x\ge  1),
$$
$$
\sum_{n\le x} \tau(n)  \le x (1+\log x) \qquad (x \ge 1),
$$
$$
\sum_{n\le x} \frac{\tau(n)}{n}  \le \frac{1}{2} \log^2 x + 2 \log x + 1 :=P(x) \qquad (x \ge 1),
$$
$$
\pi(x)-\pi(x/2)> \frac{x}{2\log x} \qquad (x \ge 3299).
$$

\end{lemma}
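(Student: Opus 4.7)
The plan is to establish the four inequalities of Lemma \ref{lem2} separately by elementary techniques: integral comparison, swapping orders of summation, and, for the last one, appeal to explicit prime-counting bounds.

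For the harmonic bound $\sum_{n\le x} 1/n \le 1+\log x$, I would use that $1/t$ is decreasing, so $1/n \le \int_{n-1}^n dt/t$ for $n\ge 2$; summing from $n=2$ to $\lfloor x\rfloor$ and adding the $n=1$ term yields the estimate. For the divisor sum, swap the order of summation via $\tau(n)=\sum_{d\mid n}1$ to get $\sum_{n\le x}\tau(n) = \sum_{d\le x}\lfloor x/d\rfloor \le x\sum_{d\le x}(1/d) \le x(1+\log x)$, by the harmonic bound just established.

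The third inequality is the most delicate. Write
\[
\sum_{n\le x}\frac{\tau(n)}{n} = \sum_{dk\le x}\frac{1}{dk} = \sum_{d\le x}\frac{1}{d}\sum_{k\le x/d}\frac{1}{k},
\]
and apply the harmonic bound to the inner sum to obtain
\[
\sum_{d\le x}\frac{1}{d}\bigl(1+\log(x/d)\bigr) = (1+\log x)\sum_{d\le x}\frac{1}{d} - \sum_{d\le x}\frac{\log d}{d}.
\]
The first piece is at most $(1+\log x)^2 = 1+2\log x + \log^2 x$. For the subtracted term, since $(\log t)/t$ is decreasing for $t\ge e$, integral comparison gives $\sum_{d\le x}(\log d)/d \ge \tfrac12\log^2 x + O(1)$. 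Combining these recovers $\tfrac12\log^2 x + 2\log x + 1$ up to a bounded additive discrepancy, which can be absorbed by checking the finite range of small $x$ where the asymptotic argument is not yet tight.

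For $\pi(x)-\pi(x/2)>x/(2\log x)$ when $x\ge 3299$, I would invoke explicit Rosser--Schoenfeld (or Dusart) bounds, which provide a lower bound on $\pi(x)$ and an upper bound on $\pi(x/2)$ of the shape $\text{const}\cdot x/\log x$. Differencing reduces the claim to an elementary inequality in $\log x$ holding beyond some explicit threshold; the residual finite range down to $3299$ is verified by tabulating $\pi$ directly. The principal obstacle is the constant-chasing in the third inequality (ensuring the leading coefficient on $\log^2 x$ is exactly $\tfrac12$ rather than $1$) and pinning down the numerical threshold $3299$ in the fourth; both are routine but require some care.
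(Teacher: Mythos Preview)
Your treatment of the first, second, and fourth inequalities is fine and matches the paper's: the first two are standard, and for the fourth the paper likewise cites Dusart's explicit bounds for large $x$ and checks the remaining range directly.

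The gap is in the third inequality. Your decomposition gives
\[
\sum_{n\le x}\frac{\tau(n)}{n}\le (1+\log x)H(x)-L(x)\le (1+\log x)^2-L(x),
\]
with $H(x)=\sum_{d\le x}1/d$ and $L(x)=\sum_{d\le x}(\log d)/d$. To land on $P(x)=\tfrac12\log^2 x+2\log x+1$ you would need $L(x)\ge\tfrac12\log^2 x$ for all $x\ge1$, but this fails (e.g.\ for $x$ just below $2$, where $L(x)=0$ and $\tfrac12\log^2 x\approx0.24$). More to the point, integral comparison only gives $L(x)\ge\tfrac12\log^2 x-C$ with a \emph{positive} constant $C$, so your bound is $P(x)+C$ for \emph{all} $x$, not just small $x$. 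The proposed fix---``absorb by checking the finite range of small $x$''---does not help, because the additive discrepancy does not vanish as $x\to\infty$. You would need sharper second-order information on both $H$ and $L$ simultaneously to close this, which is considerably more work than necessary.

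The paper instead derives the third inequality from the second by partial summation, and this route gives the constant exactly: writing $D(t)=\sum_{n\le t}\tau(n)\le t(1+\log t)$,
\[
\sum_{n\le x}\frac{\tau(n)}{n}=\frac{D(x)}{x}+\int_1^x\frac{D(t)}{t^2}\,dt\le(1+\log x)+\int_1^x\frac{1+\log t}{t}\,dt=1+2\log x+\tfrac12\log^2 x,
\]
with no additive loss and no case-checking.
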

\begin{proof}
The first two inequalities are standard exercises.
The third one follows from the second one and partial summation.
The last one follows from Dusart \cite[Eq. (5.5)]{Dus} when $x\ge 10600$, and from direct computation otherwise.
\end{proof}

\begin{proof}[Proof of Proposition \ref{prop-explicit}]
We count the number of primes in the interval $(N/2 ,N]$ covered by Type II solutions.
By Proposition \ref{corII}, if $(x,y,z)$ is a solution to \eqref{ESSp} of Type II, there are natural numbers $a,b,e$ with
$a\le b$ and 
$$
p\equiv - e \mod mab, \qquad e\mid a+b.
$$
By \eqref{type2mabub}, 
$$
mab \le p \frac{m}{m-2} \le N \frac{m}{m-2} = Qm,
$$
where $Q:= N/(m-2)$. 
For given $m,a,b,e$, the number of primes $p$ in $(N/2, N]$ satisfying $p\equiv - e \mod mab$ is 
$$
\le \left\lfloor \frac{N}{2mab}\right\rfloor+1.
$$
The number of primes in $(N/2,N]$ that can be covered by these congruences, by varying the parameters $a, b, e$, is
\begin{equation}\label{Tdef}
\le T:= \sum_{a\le b: ab \le Q } \tau(a+b)\left(\left\lfloor \frac{N}{2mab}\right\rfloor+1\right)=  T_1+T_2,
\end{equation}
say. 
For $T_1$ we may assume $mab\le N/2$, so that
$$
T_1 \le \frac{N}{2m}\sum_{a\le \sqrt{N/2m}} \frac{1}{a} \sum_{a \le b \le N/2am} \frac{\tau(a+b)}{b}.
$$
Writing $n=a+b$, the innermost sum is 
$$
\le \sum_{a \le b \le N/2am} \frac{2\tau(a+b)}{a+b} \le 2\sum_{2a \le n \le N/am} \frac{\tau(n)}{n}
\le 2 P(N/am),
$$
by Lemma \ref{lem2}. We obtain
$$
T_1 \le \frac{N}{m} \sum_{a\le \sqrt{N/2m}} \frac{P(N/am)}{a} \le 
\frac{N}{m}P(N/m)(1+\log \sqrt{N/2m}) .
$$
We let $N=2m^2$. Since $P(x)\le \frac{1}{2}(2+\log x)^2$, we get 
$$
T_1 \le \frac{m}{2}(3+\log m)^3 \qquad (m\ge 4).
$$
Similarly, 
$$
T_2 \le \sum_{a \le \sqrt{Q}} \sum_{b\le Q/a} \tau(a+b) \le \sum_{a \le \sqrt{Q}} \sum_{n\le 2Q/a} \tau(n).
$$
By Lemma \ref{lem2},
$$
T_2 \le \sum_{a \le \sqrt{Q}} \frac{2Q}{a}(1+\log 2Q/a) \le 2Q (1+\log 2Q)(1+\log \sqrt{Q}).
$$
With $N=2m^2$, we obtain
$$
T_2 \le 2m(3+\log m)^2 \qquad (m\ge 18). 
$$
The number of primes in $(N/2,N]=(m^2,2m^2]$ is, 
$$
\pi(N)-\pi(N/2)>\frac{N}{2\log N} \qquad (N\ge  3299),
$$
by Lemma \ref{lem2}.
If $T_1+T_2 $  is less than the number of primes in $(N/2,N]$, then there
are primes in $(N/2,N]$ with no Type II solution.
From the above bounds for $T_1$ and $T_2$, it follows that $T_1+T_2<\frac{N}{2\log N}$ for $m\ge 34000$.
When $16000\le m \le 34000$, we evaluate with a computer the more precise upper bounds
$$
T_1 \le \frac{N}{m} \sum_{a\le \sqrt{N/2m}} \frac{P(n/am)}{a},
\quad
T_2 \le 2Q \sum_{a\le \sqrt{Q}} \frac{1+\log(2Q/a)}{a},
$$
which we also established above, to confirm that $T_1+T_2<\frac{N}{2\log N}$.
When $3000\le m \le 16000$, we evaluate with a computer the original sum $T$ in \eqref{Tdef}
to confirm that $T<\frac{N}{2\log N}$ also holds in this range. 
When $20\le m \le 6000$, a brute force algorithm shows that there is a prime $p$ in $(m^2, 2m^2]$ that has no solution at all to \eqref{ESSp},
and hence no solution of Type II.
Finally, for $6\le m \le 19$, we verify that there is a prime $p>m^2$ that has no Type II solution.
\end{proof}

\section{Proof of Theorem \ref{th-explicit}}

We begin with the following corollary of the work in Section \ref{secII}.
\begin{corollary}
\label{cor}
If $N=2m^2$ and $m\ge 10^9$, then the number of primes in $(N/2,N]$ with a Type II solution is $< \frac{1}{10} N^{4/3} m^{-7/6}$.
\end{corollary}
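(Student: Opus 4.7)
The plan is simply to transcribe the two estimates already produced in Section \ref{secII} into the asymptotic shape $N^{4/3}m^{-7/6}$. Namely, under the hypothesis $N=2m^{2}$, the proof of Proposition \ref{prop-explicit} yields (for all $m\ge 18$)
\[
T_{1}\le \tfrac{m}{2}(3+\log m)^{3},\qquad T_{2}\le 2m(3+\log m)^{2},
\]
and the number of primes in $(N/2,N]$ with a Type~II solution is at most $T_{1}+T_{2}$. Since $N^{4/3}m^{-7/6}=(2m^{2})^{4/3}m^{-7/6}=2^{4/3}m^{3/2}$, the claim reduces to showing
\[
\tfrac{1}{2}(3+\log m)^{3}+2(3+\log m)^{2}<\tfrac{2^{4/3}}{10}\,m^{1/2}\qquad (m\ge 10^{9}).
\]

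The next step is to verify this single-variable inequality. At $m=10^{9}$ one has $3+\log m\approx 23.72$, so the left-hand side is roughly $7.8\times 10^{3}$, while the right-hand side is approximately $0.252\cdot 10^{4.5}\approx 7.97\times 10^{3}$; the inequality therefore holds at the endpoint with a small margin. To extend to all larger $m$, I would compare derivatives: $\tfrac{d}{dm}\left[\tfrac{1}{2}(3+\log m)^{3}+2(3+\log m)^{2}\right]=O\!\bigl((\log m)^{2}/m\bigr)$, whereas $\tfrac{d}{dm}\bigl[\tfrac{2^{4/3}}{10}m^{1/2}\bigr]\asymp m^{-1/2}$, so the ratio of the right derivative to the left derivative tends to infinity. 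Thus the gap between the two sides is nondecreasing (indeed increasing) for $m\ge 10^{9}$, and the inequality is preserved for all such~$m$.

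Combining, $T_{1}+T_{2}<\tfrac{1}{10}\,N^{4/3}m^{-7/6}$ for $m\ge 10^{9}$, which is exactly the desired bound on the count of primes in $(N/2,N]$ admitting a Type~II representation. The only nontrivial step is the numerical comparison at $m=10^{9}$; everything else is bookkeeping already done in Section \ref{secII}.
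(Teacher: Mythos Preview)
Your proof is correct and follows the same approach as the paper: invoke the bounds $T_1\le\tfrac{m}{2}(3+\log m)^3$ and $T_2\le 2m(3+\log m)^2$ from the proof of Proposition~\ref{prop-explicit}, rewrite $\tfrac{1}{10}N^{4/3}m^{-7/6}=\tfrac{2^{4/3}}{10}m^{3/2}$, and check the resulting one-variable inequality for $m\ge 10^9$. The paper simply asserts the inequality; your explicit endpoint check and derivative comparison add a welcome justification (though the derivative step would be fully rigorous once you note that the right-side derivative already exceeds the left-side derivative at $m=10^{9}$, not merely that their ratio tends to infinity).
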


\begin{proof}
 From the proof of Proposition \ref{prop-explicit}, the number in question is 
$$
\le T_1+T_2 \le \frac{m}{2}(3+\log m)^3+2m(3+\log m)^2.
$$
This is $<\frac{1}{10} N^{4/3} m^{-7/6} = \frac{2^{4/3}}{10}  m^{3/2} $ for $m\ge 10^9$.
\end{proof}

With Proposition \ref{typeIcount} and Corollary \ref{cor} we have that 
when $m\ge10^9$ and $N=2m^2$, the number of primes $p\in(N/2,N]$ for which
$m/p$ is the sum of 3 unit fractions is $< 698.1N^{4/3}/m^{7/6}$.  We contrast this
upper bound with the lower bound in Lemma \ref{lem2} for the number of primes in $(N/2,N]$.
And we find that when $m\ge6.52\times10^9$ and $N=2m^2$, we have
\[
\frac N{2\log N}>698.1\frac{N^{4/3}}{m^{7/6}}.
\]
Hence, Theorem \ref{th-explicit} follows.

\section{Some final thoughts}

In Theorem \ref{th-explicit} we show that the dyadic interval $(m^2,2m^2)$ has a
prime $p$ for which $m/p$ is not the sum of 3 unit fractions, for $m$ beyond a
numerically explicit bound.  One could ask the question for the smaller interval $(m,2m)$.
Here it is a simple exercise to show that $m/(m+1)$ is not the sum of 3 unit fractions
once $m\ge42$.  How about for prime $n$?  Here we have that if $p$ is a prime
in $(m,(6/5)(m-1))$, then $m/p$ is not the sum of 3 unit fractions.  To see this,
note that, as we have seen, if $m/p$ is the sum of 3 unit fractions, then the representation
is either Type I or Type II, so that at least one summand is $\le 1/p$.  The other two
summands have sum $\le 5/6$, so $m/p \le 5/6+1/p$, which implies $p\ge(6/5)(m-1)$,
a contradiction.  Using explicit prime estimates as in \cite{Dus} and a calculation, one can show 
the interval $(m,(6/5)(m-1))$ contains a prime for every $m\ge32$.  In fact, it is not hard
to check smaller $m$'s to see that for each $m\ge14$ there is a prime $p\in(m,2m)$ with
$m/p$ not the sum of 3 unit fractions.

A simple corollary of Theorem \ref{th-mjbound} is that for each $\epsilon>0$ and each 
positive integer $j$, there are infinitely many positive rationals $r<\epsilon$ which are
not the sum of $j$ unit fractions.  Perhaps this statement has a more direct proof?

Talking about $j$ summands, perhaps the natural generalization of the Erd\H os--Straus
conjecture is that for each $m\ge4$ there are at most finitely many $n$ for which
$m/n$ is not the sum of $m-1$ unit fractions.   Might this be provable for some $m$?

\section*{Acknowledgment}
Thanks are due to Paul Pollack for some helpful comments.


\begin{thebibliography}{99}
\bibitem{A}
 A. Aigner, Br\"uche als Summe von Stammbr\"uchen, J. Reine Angew. Math. 
 {\bf 214/215} (1964), 174--179.
 
\bibitem{Dus} P. Dusart,
Explicit estimates of some functions over primes. Ramanujan J. {\bf45} (2018), 227--251. 

\bibitem{ET} C. Elsholtz and T. Tao,
Counting the number of solutions to the Erd\H{o}s--Straus equation on unit fractions,
J. Aust. Math. Soc. {\bf94 }(2013), 50--105.

\bibitem{EG} P. Erd\H os and R. L. Graham,
Old and new problems and results in combinatorial number theory,
L'Enseignement Math. \#28 (1980), 128 pp.

\bibitem{G} A. Gottschlich,
On positive integers $n$ dividing the $n$th
term of an elliptic divisibility sequence,
New York J. Math. {\bf18} (2012), 409--420.

\bibitem{Gu} R. K. Guy,
Unsolved problems in number theory,
Third Ed., Springer, New York, 2004.

\bibitem{K} D. Koukoulopoulos,
The distribution of prime numbers, Graduate Studies in Math. {\bf203},
Amer. Math. Soc., Providence, RI, 2019.

\bibitem{MD} S. Mihnea and B. C. Dumitru,
Further verification and empirical evidence for the Erd\H os--Straus conjecture,
arXiv:2509.00128

\bibitem{N}
M. Nakayama, On the decomposition of a rational number into ``Stammbr\"uche.", 
T\^{o}hoku Math. J. {\bf46} (1939), 1--21.

\bibitem{NR}
J.-L. Nicolas and G. Robin, Majorations explicites pour le nombre de diviseurs de $N$, Canad. Math.
Bull. {\bf26} (1983), 485--492.

\bibitem{O} R. Obl\'ath,
Sur l'\'equation diophantienne $\frac4n=\frac1{x_1}+\frac1{x_2}+\frac1{x_3}$,
Mathesis {\bf59} (1950), 308--316.

\bibitem{S} S. E. Salez,
The Erd\H os--Straus conjecture; New modular equations and checking up to
$n = 10^{17}$, arXiv:1406.6307.

\bibitem{V} R. C. Vaughan,
On a problem of Erd\H os, Straus and Schinzel,
Mathematika {\bf17} (1970), 193--198.

\end{thebibliography}
\end{document}